\numberwithin{equation}{section}
\theoremstyle{plain}
\newtheorem{thm}{Theorem}[section]
\newtheorem{lem}[thm]{Lemma}
\newtheorem{cor}[thm]{Corollary}
\theoremstyle{remark}
\newtheorem{rem}[thm]{Remark}
\newtheorem{ex}[thm]{Example}
\newcommand{\defas}{:=}
\newcommand{\dual}[2]{\left\langle{#1},{#2}\right\rangle}
\DeclareMathOperator*{\infimum}{inf\vphantom{p}}
\newcommand{\norm}[1]{\left\|{#1}\right\|}
\newcommand{\snorm}[1]{\left|{#1}\right|}
\newcommand{\sprod}[2]{\left({#1},{#2}\right)}
\newcommand{\ud}{u_d}
\newcommand{\vphi}{\varphi}
\newcommand{\N}{{\mathbb{N}}}
\newcommand{\R}{{\mathbb{R}}}
\newcommand{\weak}{\rightharpoonup}
\begin{document}

\title[Quasi-best approximation in PDE constraint optimization]{Quasi-best approximation 
in optimization \\ with PDE constraints}

\begin{abstract}
We consider finite element solutions to quadratic optimization problems, where 
the state depends on the control via a well-posed linear partial differential 
equation. Exploiting the structure of a suitably reduced optimality system, we prove that the combined error in the state and adjoint state of the variational discretization is bounded by the best approximation error in the underlying discrete spaces. The constant in this bound depends on the inverse square-root of the Tikhonov regularization parameter. Furthermore, if the operators of control-action and observation are compact, this quasi-best-approximation constant becomes independent of the Tikhonov parameter as the meshsize tends to $0$ and we give quantitative relationships between meshsize and Tikhonov parameter ensuring this independence. We also derive generalizations of these results when the control variable is discretized or when it is taken from a convex set.
\end{abstract}

\author[F. Gaspoz]{Fernando Gaspoz}
\address[Fernando Gaspoz]{Technische Universit\"at Dortmund,
        Fakult\"at f\"ur Mathematik, Vogelpothsweg 87,
        44227 Dortmund, Germany.}
\email{fernando.gaspoz@tu-dortmund.de}
\author[C. Kreuzer]{Christian Kreuzer}
\address[Christian Kreuzer]{Technische Universit\"at Dortmund,
        Fakult\"at f\"ur Mathematik, Vogelpothsweg 87,
        44227 Dortmund, Germany.}
\email{christian.kreuzer@tu-dortmund.de}
\author[A. Veeser]{Andreas Veeser}
\address[Andreas Veeser]{Dipartimento di Matematica 'F. Enriques',
Università degli Studi di Milano,
Via C. Saldini, 50,
20133 Milano, Italy.}
\email{andreas.veeser@unimi.it}
\author[W. Wollner]{Winnifried Wollner}
\address[Winnifried Wollner]{Technische Universit\"at Darmstadt,
        Fachbereich Mathematik, Dolivostr. 15,
         64293 Darmstadt, Germany.}
\email{wollner@mathematik.tu-darmstadt.de}
\date{\today}
\maketitle

\section{Introduction}

Optimization problems with PDE constraints are ubiquitous.  A basic,
and regularly considered, example is
\begin{equation}
\label{sim-min}
 \min_{(q,u)\in L^2 \times H^1_0}
  \frac12 \snorm{ u-\ud }_0^2 + \frac\alpha2 \snorm{q}_0^2
\quad\text{subject to}\quad
 -\Delta u=q
\end{equation}
where $\snorm{\cdot}_0$ denotes the $L^2$-norm over some underlying domain, 
$\ud$ is the desired state and $\alpha>0$ scales the cost of the control.
Additionally, constraints on the control $q$ and/or the state $u$ can be
added, and the error due to a discretization of the state equation, and 
possibly the control, have been analyzed. For piecewise constant 
discretizations 
of the control this has been done in~\cite{Falk:1973,Geveci:1979}
including possible box-constraints on the control variable, see also 
the summary of obtainable 
convergence orders including Neumann-control in~\cite{Malanowski:1982}. 
The consideration of element wise linear functions for the 
control has been done in~\cite{CasasTroeltzsch:2003,Roesch:2006} in the 
presence of control constraints.

In~\cite{Hinze:2005} it was observed, that the minimization problem could be 
solved without prescribing a discretization of the control since the control 
can be recovered from the optimality condition and thus a discretization of 
the control is induced by the discretization for the state equation. With 
this $O(h^2)$ convergence 
for the control in $L^2$ could be shown even in the presence of box 
control-constraints. It was observed by~\cite{MeyerRoesch:2004} that the same 
convergence order can be obtained if a discretized control is used and a 
post-processing step based upon the optimality conditions is applied.

Due to the structure of the objective in~\eqref{sim-min} these above 
mentioned estimates make use of the `natural norm'
\[
 \snorm{u}_0 + \sqrt{\alpha}\snorm{q}_0.
\]
Although this norm is natural due to the functional, it induces a scaling 
$\sqrt{\alpha}$ in all estimates involving the control. Further estimates, 
for instance of $H^1$-norms 
of the state thereby also contain this scaling. Moreover, the above 
`natural norm' is not balanced in terms of approximation accuracy, i.e., 
the error of the state in $L^2$ will typically decay at least as fast as 
the error of the control.

The later effect, however, is invisible as long as the 
approximation accuracy of both terms is limited by the selected discrete spaces, 
and not by the regularity of the solutions, as it is typically the 
case for the model~\eqref{sim-min}. However, in the presence of 
pointwise constraints on the state, see,
e.g.,~\cite{CasasMateos:2002-1,DeckelnickHinze:2007b,Meyer:2006,DeckelnickHinze:2007,NeitzelWollner:2018}  
or the gradient of the
state~\cite{DeckelnickGuentherHinze:2007,GuentherHinze:2009,OrtnerWollner:2009,Wollner:2012}
optimal order estimates 
can only be obtained for the control variable; while numerics shows a
faster  
convergence of the error in the state variable in $L^2$.

As an alternative to the aforementioned works, one may combine the
error in the state with error in the (suitably rescaled) adjoint
state, measuring both in the norms that are given by the functional
analytic set-up of the PDE constraint. For problem
\eqref{sim-min}, this leads to the norm 
\begin{equation}
\label{err-norm}
 \norm{x}^2
 \defas
 \snorm{u}_1^2 + \frac{1}{\alpha} \snorm{p}_1^2,
 \quad
x = (u,z),
\end{equation}
where $\snorm{\cdot}_1$ denotes the $H^1_0$-norm. For respective
counterparts of \eqref{err-norm}, Chrysafinos and Karatzas
\cite{ChrysafinosKaratzas:2015,Chrysafinos.Karatzas:12} prove
so-called symmetric error estimates or quasi-best approximation
results. The growth of the quasi-best-approximation constant is
limited by $\alpha^{-2}$ and $\alpha^{-3/2}$, respectively.

In this article, we prove abstract quasi-best approximation results,
where the discretization error is measured in a counterpart of
\eqref{err-norm}. In order to illustrate our results, assume that the
underlying domain is convex, let $(V_h)_h$ be a sequence of conforming
finite-dimensional spaces that approximates $H^1_0$, and consider the
variational discretization of \eqref{sim-min}.  If we denote by
$x_h=(u_h,p_h)$ the pairs of approximate primal and dual states, our
results yield (cf.\ Theorem~\ref{T:qba} and Example~\ref{ex:sim-opt}) 
\begin{equation*}
 \norm{x-x_h}
 \leq
 \nu_h
 \inf_{v_h \in V_h \times V_h} \norm{x-v_h}
\end{equation*}
with
\begin{equation*}
\nu_h
\leq
\kappa_\alpha \defas 2 \left( 1 + C_F \left( 1 + \frac{2 C_F}{\sqrt{\alpha}} \right)\right)
\quad\text{and}\quad
|\nu_h - 1| \leq C_\mathcal{I} \kappa_\alpha h
\quad\text{as }h\to0.
\end{equation*}
Here $C_F$ is the constant in the Friedrichs inequality and $C_\mathcal{I}$ is an interpolation constant depending on the shape regularity on the underlying meshes. In contrast to the first, non-asymptotic relationship, the second,
asymptotic one exploits the compactness of the observation and
control-action operators and elliptic regularity theory. Notably, the
latter reveals that C\'ea's lemma, which holds for the constraint
discretization, is recovered as $h\to0$ and, in particular, ensures an
approximation quality independent of $\alpha$ for
$h=O(\sqrt{\alpha})$. 

The rest of the paper proceeds as follows. In Section~\ref{sec:2}, we state
precisely the considered problem class, allowing for any linear,
bounded, and inf-sup-stable operator in the constraint. Furthermore,
we reduce the optimality system by eliminating the control, and we lay
the groundwork for our results by a careful discussion of the
continuity and nondegeneracy properties of the associated bilinear
form. 

Section~\ref{sec:3} constitutes the core of this work and establishes
quasi-best approximation for the variational discretization. To this
end, the variational discretization is viewed as a Petrov-Galerkin
method and we employ the formula for the quasi-best-approximation
constant in Tantardini and Veeser \cite{Tantardini.Veeser:16}. For the
asymptotic behavior of the quasi-best-approximation constant, we
additionally invoke a duality argument, which is similar to, but
simpler than, Schatz \cite{Schatz:1974}. 

The last two sections center on generalizations of these results. In
Section~\ref{sec:4}, we consider approximate control-action operators,
covering in particular the discretization of the control
variable. Finally, Section~\ref{sec:5}, 
deals with nonlinear optimality systems arising from additional convex constraints
for the control. The derived results complement those of the linear
case and the simplification of Schatz' argument comes in quite
useful. 

\section{Model optimization problem and reduced optimality system}
\label{sec:2}
%
%
%

We introduce our model optimization problem.  Assume that the control
variable $q$ is taken from a real Hilbert space $Q$ with scalar
product  
$\sprod{\cdot}{\cdot}_Q$ and induced norm $\norm{\cdot}_Q$.  Its
corresponding state  $u\in V_1$ is determined by solving a linear
boundary value problem of the form 
\begin{equation}
\label{bvp}
 Au = Cq
\end{equation}
with the following setting:
\begin{itemize}
\item The \emph{state space} $V_1$ is a Hilbert space with scalar 
product $\sprod{\cdot}{\cdot}_{1}$ and induced norm
$\norm{\cdot}_{1}$. Its dual and the corresponding duality pairing are
indicated with $V_1^*$ and $\dual{\cdot}{\cdot}_{1}$, respectively. 
\item The \emph{differential operator} $A$ is induced by bilinear form
  $a\colon V_1 \times V_2 \to \R$, where $V_2$ is a second Hilbert
  space with scalar product $\sprod{\cdot}{\cdot}_{2}$, induced norm
  $\norm{\cdot}_{2}$, dual space $V_2^*$, and dual pairing
  $\dual{\cdot}{\cdot}_{2}$. We assume that the bilinear form $a$ is
  bounded and satisfies the following inf-sup conditions: 
\begin{subequations}
\label{ass-on-a}
\begin{gather}
\label{cont-a}
 M_a
 \defas
 \sup_{\norm{v_1}_1=1,\, \norm{v_2}_{2}=1} a(v_1,v_2) < \infty,
\\
\label{non-deg-a}
 \forall v_1 \in V_1
\quad
 \Big( \forall v_2\in V_1 \; a(v_1,v_2) = 0 \Big) \implies v_1 =0.
\\
\label{inf-sup-a}
 m_a
 \defas
 \infimum_{\norm{v_2}_2=1} \; \sup_{\norm{v_1}_{1}=1} a(v_1,v_2) > 0,
\end{gather} 
\end{subequations}
Employing well-known inf-sup theory (cf., e.g.,
Babu\v{s}ka~\cite{Babuska:71}), we see that the operator $A\colon V_1
\to V_2^*, v_1 \mapsto a(v_1,\cdot)$ is linear and boundedly
invertible. 
\item The \emph{control-action operator} $C\colon Q \to V_2^*$ is
  linear and bounded with constant $M_C$. 
\end{itemize}
Our goal is then to numerically solve the \emph{constrained optimization 
problem}
\begin{equation}
\label{mod-min}
 \min_{(q,u)\in Q \times V_1}
  \frac12\norm{Iu-\ud }_W^2 + \frac\alpha2 \norm{q}_Q^2
\quad\text{subject to}\quad
 Au = Cq
\end{equation}
where we assume in addition:
\begin{itemize}
\item The \emph{desired ``state''} $\ud$ is an element of a Hilbert space $W$ with scalar product $\sprod{\cdot}{\cdot}_W$ and induced norm $\norm{\cdot}_W$.
\item The \emph{observation operator} $I\colon V_1 \to W$ is linear, and bounded with constant $M_I$.
\item The \emph{cost of the control}, which can be viewed as a Tikhonov 
regularization, is scaled with the parameter $\alpha>0$.
\end{itemize}

Problem~\eqref{mod-min} is a quadratic minimization problem with a linear 
constraint.  The objective function is convex with respect to 
\begin{equation}
\label{enorm}
 (u,q) \mapsto ( \norm{Iu}_W^2+\alpha\norm{q}_Q^2 )^{1/2}
\end{equation}
and strictly convex in $q$.  Consequently, 
standard arguments ensure the existence of a unique solution; 
see, e.g.,~Lions~\cite[Theorem~1.1]{Lions:1971} 
or~Tr\"oltzsch~\cite[Chapter~2.5]{Troeltzsch:2005}.

If $Q=L^2=W$, $V_1=V_2=H^1_0$, $A=-\Delta$ is the (weak) Laplacian,
and $C$ and $I$ 
are the canonical compact immersions $L^2 \to \big( H^1_0 \big)^*$ and 
$H^1_0 \to L^2$, then~\eqref{mod-min} simplifies to the optimization 
problem~\eqref{sim-min} in the introduction.  Notice that, in this case, the operators 
$C$ and $I$ are related by $C^*=I$.

To formulate the \emph{optimality system} for~\eqref{mod-min}, it is useful to 
define the adjoint operators $A^*$, $C^*$, $I^*$ of $A$, $C$, $I$ 
by
\[
 A^* v_2 = a(\cdot,v_2),
\quad
 \sprod{q}{C^* v_2}_Q = \dual{Cq}{v_2},
\quad
 \dual{I^* w}{v_1}_1 = \sprod{Iv_1}{w}_W
\]
for all $v_1\in V_1$, $v_2\in V_2$, $q\in Q$, $w\in W$.  Thanks to the convexity of the 
problem~\eqref{mod-min}, a pair $(q,u) \in Q \times V_1$ is 
a minimum point if and only if there exists $p \in V_2$ such that 
\begin{equation}
\label{mod-opt-pre}
 Au = Cq,
\quad
 A^*p = I^*(Iu-\ud),
\quad
 \alpha q = - C^*p.
\end{equation}
We may eliminate $q$ by inserting the last equation into the first one and 
multiplying the second equation by $\beta>0$.  We thus obtain the
following \emph{reduced optimality system} for the pair $(u,p)\in
V_1\times V_2$:
\begin{equation}
\label{mod-ropt-pre}
 \left(\begin{matrix}
  - \beta I^* I & \beta A^* \\
  A & \tfrac{1}{\alpha} CC^* 
 \end{matrix}\right)
 \left(\begin{matrix}
 u \\
 p
 \end{matrix}\right)
 =
 \left(\begin{matrix}
  - \beta I^*\ud \\
  0
 \end{matrix}\right).
\end{equation}
Notice that the second row of equations, $ Au + \tfrac{1}{\alpha} CC^*
p=0$, suggests scaling the adjoint state $p$ by the factor
$\frac{1}{\alpha}$, while the first row, $-\beta I^* I u + \beta A^* p
= - \beta I^*\ud$, suggests no scaling at all. 
As a compromise, we propose to use $z=\tfrac{1}{\sqrt{\alpha}}  p$ and $\beta=\tfrac{1}{\sqrt{\alpha}}$.

We thus transform the optimality system~\eqref{mod-opt-pre} into
\begin{equation}
\label{mod-opt}
 Au = Cq,
\quad
 A^*z = \tfrac{1}{\sqrt{\alpha}}  I^*(Iu-\ud),
\quad
 \sqrt{\alpha}q = - C^*z
\end{equation}
and the reduced optimality system~\eqref{mod-ropt-pre} into
\begin{equation}
\label{mod-ropt}
 \left(\begin{matrix}
  -\tfrac{1}{\sqrt{\alpha}}  I^* I &  A^* \\
  A & \tfrac{1}{\sqrt{\alpha}}  CC^* 
 \end{matrix}\right)
 \left(\begin{matrix}
 u \\
 z
 \end{matrix}\right)
 =
 \left(\begin{matrix}
  - \tfrac{1}{\sqrt{\alpha}}  I^* \ud \\
  0 
 \end{matrix}\right).
\end{equation}
This \emph{rescaled and reduced optimality system} deviates from the
usual KKT-for\-mu\-la\-t\-ion, but has an interesting structure.  As
the KKT-formulation, it is symmetric also for non-symmetric $A$. The
off-diagonal consists of two interrelated invertible operators, while
the diagonal entries are (semi-)definite, symmetric operators. Notice
that, upon inverting the rows, the roles of the diagonal and
off-diagonal can be exchanged. For the optimization problem
\eqref{sim-min}, the operator matrix is then diagonally dominant in
that $CC^*$ and $I^*I$ are compact operators. 

Let us give a weak formulation of the rescaled and reduced optimality
system. Its rows are equivalently written as 
\begin{subequations}
\label{mod-vred-rows}
\begin{align}
 &\forall \vphi_1\in V_1
 & a(\vphi_1,z) - \tfrac{1}{\sqrt{\alpha}} \sprod{Iu}{I\vphi_1}_W
 &=
-\tfrac{1}{\sqrt{\alpha}} \sprod{\ud}{I\vphi_1}_W,
\\
 &\forall \vphi_2\in V_2
 &a(u,\vphi_2) + \tfrac{1}{\sqrt{\alpha}} \sprod{C^*z}{C^*\vphi_2}_Q
 &=
 0,
\end{align}
\end{subequations}
and so we are led to introduce the Hilbert space
\[
 V \defas V_1 \times V_2
\quad\text{with}\quad
 \norm{v}
 \defas
 \left(
  \norm{v_1}_{1}^2 + \norm{v_2}_{2}^2
 \right)^{1/2},
 \quad
 v=(v_1,v_2) \in V,
\]
and the bilinear form $b\colon V\times V\to\R$ given by
\begin{subequations}
\label{mod-blin}
\begin{equation}
 b(v,\vphi)
 \defas
 a(v,\vphi) + \tfrac{1}{\sqrt{\alpha}} c(v,\vphi)
\end{equation}
with
\begin{align}
\label{vec-a}
 a(v,\vphi)
 &\defas
 a(v_1,\vphi_2) + a(\vphi_1,v_2),
\\
 c(v,\vphi)
 &\defas
 \sprod{C^*v_2}{C^*\vphi_2}_Q
  -\sprod{Iv_1}{I\vphi_1}_W
\end{align}
\end{subequations}
for $v=(v_1,v_2),\vphi=(\vphi_1,\vphi_2)\in V$.  Note that we use the
same letter $a$ for the bilinear form inducing the operator $A$ and
for the one in \eqref{vec-a}; this ``operator overloading'' should not
cause confusion when the domain is clear. If not, we shall distinguish
the two forms by writing $a_{|V_1\times V_2}$ or $a_{|V\times V}$. In
this notation, the variational formulation of the rescaled and reduced
optimality system~\eqref{mod-ropt} simply reads 
\begin{equation}
\label{mod-vred}
\begin{aligned}
 \text{find } x\in V \text{ such that}
\quad
 \forall \vphi \in V
\;\;
 b(x,\vphi) = -\tfrac{1}{\sqrt{\alpha}} \sprod{\ud}{I\vphi_1}_W.
\end{aligned}
\end{equation}
A pair $x = (u,z) \in X$ is a solution of~\eqref{mod-vred} if and only
if $(u,z)$ is a solution of \eqref{mod-vred-rows} if and only if the  
triple $(u,z,-\tfrac{1}{\sqrt{\alpha}}C^*z)\in V\times Q$ verifies the
rescaled optimality system~\eqref{mod-opt}.  Consequently, thanks to
the convexity of~\eqref{mod-min}, if $x=(u,z)\in V$ is a solution
of~\eqref{mod-vred}, then  
$(-\tfrac{1}{\sqrt{\alpha}}C^*z,u)\in Q \times V_1$ is a solution of
the original optimization problem~\eqref{mod-min}. 

Let us analyze the bilinear form $b=a + \frac{1}{\sqrt{\alpha}}c$.
We readily see that
\begin{equation}
\label{symmetry}
 \text{$a_{|V\times V}$, $c$, and so $b$ are symmetric},
\end{equation}
but $b$ is not coercive in general. Consider, for example, a set-up
where there exists $v=(v_1,v_2)\in V$ such that $\norm{Iv_1}_W >
\norm{C^*v_2}_Q$. Then $c$ is not coercive and so, even for $a$
coercive, also $b$ is not coercive for $\alpha>0$ sufficiently small. 

 In order to obtain further properties, let us first consider the
 contributions $a$ and $c$ separately. The bilinear form $c$ is
 closely related to the original minimization problem \eqref{mod-min}
 and its ``energy seminorm'' \eqref{enorm}. To see this, observe that,
 if $(u,z)\in V$ and $\sqrt{\alpha} q = -C^*z$, we have the
 correspondence 
\begin{equation*}
 \norm{Iu}_W^2 + \norm{C^*z}_Q^2 
 =
 \norm{Iu}_W^2 + \alpha\norm{q}_Q^2,
\end{equation*}
which motivates to introduce the seminorm 
\begin{equation}
\label{enorm'}
 \snorm{v}
 \defas
 \left( \norm{Iv_1}_W^2 + \norm{C^*v_2}_Q^2
 \right)^{1/2}
\end{equation}
on $V$. Thus, denoting by $Z$ the kernel of $\snorm{\cdot}$ and
realizing that the bilinear form $c$ is well-defined on the quotient
space $V/Z$, we see that 
\begin{equation}
\label{cont-inf-sup-c}
 \sup_{\snorm{v}=1,\,\snorm{\vphi}=1} |c(v,\vphi)|
 =
 1
 =
 \infimum_{\snorm{v}=1} \; \sup_{\snorm{\vphi}=1} c(v,\vphi),
\end{equation}
where the second identity relies on 
\begin{equation}
\label{inf-sup-for-c}
 c\big( (v_1,v_2), (-v_1,v_2) \big)
 =
 \norm{C^*v_2}_Q^2 + \norm{Iv_1}_W^2
 =
 \snorm{v}^2.
\end{equation}
Since 
\begin{equation}
\label{e-V-norms}
 \forall v \in V
\quad
 \snorm{v}
 \leq
 M \norm{v}
\end{equation}
with
\begin{equation*}
 M \defas \max\{M_I,M_C\},
\end{equation*}
the form $c$ is also continuous in $V$, with constant $M$.

The bilinear form $a_{|V \times V}$ inherits its continuity and nondegeneracy properties from $a_{|V_1 \times V_2}$. More precisely, we have
\begin{equation}
\label{cont-inf-sup-vec-a}
 \sup_{\norm{v}=1,\,\norm{\vphi}=1} |a(v,\vphi)|
 =
 M_a
\quad\text{and}\quad
 \infimum_{\norm{v}=1} \; \sup_{\norm{\vphi}=1} a(v,\vphi)
 =
 m_a
\end{equation}
with $M_a$ and $m_a$ from \eqref{ass-on-a}. While the first identity is straight-forward, the second one hinges on the inf-sup-duality (cf.\ Babu\v{s}ka~\cite{Babuska:71})
\begin{equation}
\label{inf-sup-duality}
 \infimum_{\norm{v_1}_1=1} \; \sup_{\norm{\vphi_2}_{2}=1} a(v_1,\vphi_2)
 =
 \infimum_{\norm{v_2}_2=1} \; \sup_{\norm{\vphi_1}_{1}=1} a(\vphi_1,v_2)
\end{equation}
for $a$ with domain $V_1\times V_2$.

Turning to the complete bilinear form $b$, we may sum up the continuity properties as follows: for all $v,\vphi \in V$, we have
\begin{equation}
\label{cont}
%
 |b(v,\vphi)|
 \leq
 M_a \norm{v} \norm{\vphi}
  + \frac{M}{\sqrt{\alpha}} \norm{v} \snorm{\vphi}
 \leq
  \norm{v} \norm{\vphi}_\alpha
\end{equation}
with
\begin{equation}
  \label{norm-alpha}
  \norm{\vphi}_\alpha
 \defas
 M_a \norm{\vphi} + \frac{M}{\sqrt{\alpha}} \snorm{\vphi}.
\end{equation}
Here we have equipped $V$ as trial space with $\norm{\cdot}$ and as test space with $\norm{\cdot}_\alpha$. The former is in accordance with our scopes in the error analyses below and the latter avoids in particular a dependence on $M/\sqrt{\alpha}$ of the continuity constant of $b$ and in the following bound for the right-hand side in \eqref{mod-vred}: for all $\vphi=(\vphi_1,\vphi_2)\in V$,
\begin{equation}
 \snorm{ \tfrac{1}{\sqrt{\alpha}} \sprod{\ud}{I\vphi_1}_W }
 \leq
 \frac{M_I}{\sqrt{\alpha}}\norm{u_d}_W \norm{\vphi_1}_1
 \leq
 \norm{u_d}_W \norm{\vphi}_\alpha.  
\end{equation}

The derivation of the nondegeneracy properties of the bilinear form
$b$ is more subtle. In order to establish the crucial inf-sup
condition \eqref{inf-sup-a}, let $\vphi=(\vphi_1,\vphi_2)\in V$ be
given.

In order to find a suitable $v=(v_1,v_2) \in V$, we combine the nondegeneracy properties of $a$ and $c$ in the ansatz
\begin{subequations}
\label{inf-sup-ansatz}	
\begin{equation}
 v
 =
 (w_1,w_2) + \gamma (-\vphi_1,\vphi_2),
\end{equation}
where $\gamma\geq0$ and $w=(w_1,w_2)\in V$ is chosen with the help of \eqref{cont-inf-sup-vec-a} such that $\norm{w} = \norm{\vphi}$ and $a(w,\vphi) \geq m_a\norm{\vphi}^2$. We then have
\begin{equation}
\label{inf-sup-ansatz;norm-test-fct}
 \norm{v}
 \leq
 \norm{w} + \gamma \norm{\vphi}
 \leq
 ( 1+ \gamma) \norm{\vphi}
\end{equation}
and 
\begin{equation}
\begin{aligned}
 b(v,\vphi)
 &\geq
 m_a \norm{\vphi}^2
 +
 \frac{\gamma}{\sqrt{\alpha}} \snorm{\vphi}^2
 -
 \frac{M}{\sqrt{\alpha}} \snorm{\vphi} \norm{\vphi}
\\
 &\geq
 m_a \left(
  \norm{\vphi} + \frac{M}{M_a\sqrt{\alpha}} \snorm{\vphi}
 \right) \norm{\vphi}
 +
 \frac{\gamma}{\sqrt{\alpha}} \snorm{\vphi}^2
 -
 \frac{2M}{\sqrt{\alpha}} \snorm{\vphi}\norm{\vphi}.
\end{aligned}
\end{equation}
\end{subequations}
thanks the continuity \eqref{cont-inf-sup-c} of $c$ and $m_a\leq M_a$.
Using the inequality $2st \leq \epsilon s^2 + t^2/\epsilon$ with $\epsilon=\frac{L}{1+2L} m_a > 0$ and
\begin{subequations}
\label{def-kappa}
\begin{equation}
\label{L}
 L := M/\sqrt{\alpha},
\end{equation}
we may bound the critical term by
\begin{align*}
 \frac{2M}{\sqrt{\alpha}} \snorm{\vphi}^2
 \leq
 \frac{L}{1+2L} m_a \norm{\vphi}^2
 +
 \frac{1+2L}{L} \frac{M^2}{m_a\alpha}\snorm{\vphi}^2.
\end{align*}
Thus, if we define
\begin{equation}
\label{gamma}
 \gamma
 :=
  \frac{M}{m_a} \left( 1 + \frac{2M}{\sqrt{\alpha}} \right)  
\end{equation}
by the coefficient of $\snorm{\vphi}^2$ divided by $\sqrt{\alpha}$, set
\begin{equation}
\label{kappa}
 \kappa
 :=
 \frac{1+2L}{1+L} (1 + \gamma)
 =
 \frac{1+2L}{1+L} \left(
  1 + \frac{M}{m_a} \left( 1 + \frac{2M}{\sqrt{\alpha}} \right)
 \right),
\end{equation}
\end{subequations}
and recall \eqref{inf-sup-ansatz;norm-test-fct}, we arrive at
\begin{equation}
\label{inf-sup-const2}
\begin{aligned}
  b(v,\vphi)
  \geq
  \frac{1+L}{1+2L} \frac{m_a}{M_a} \norm{\vphi}_\alpha \norm{\vphi} 
  \geq
  \frac{1}{\kappa} \frac{m_a}{M_a} \norm{v} \norm{\vphi}_\alpha,
\end{aligned}
\end{equation}
where the norms on the right-hand side coincide with those in the continuity bound \eqref{cont}. We therefore have the following basic result.

\begin{thm}[Bilinear form of reduced optimality system]
\label{T:cont-inf-sup}
If we equip $V$ as trial space with $\norm{\cdot}$ and as test space with $\norm{\cdot}_\alpha$, then the inf-sup constant $m_b$ and the continuity constant $M_b$ of the bilinear form \eqref{mod-blin} satisfy
\begin{equation*}
 0
 <
 \frac{1}{\kappa} 
 	\frac{m_a}{M_a}
 \le
 m_b
 \le
 M_b
 \le 1,
\end{equation*}
where $\kappa$ is defined by the relations \eqref{def-kappa}.
\end{thm}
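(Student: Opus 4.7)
The continuity bound $M_b \le 1$ is already contained in \eqref{cont}, so the content of the theorem lies in the inf-sup bound $m_b \ge \tfrac{1}{\kappa}\tfrac{m_a}{M_a}$. My plan is, for a fixed nonzero $\vphi \in V$, to construct explicitly a test function $v = v(\vphi)$ that combines the two available nondegeneracy mechanisms: the inf-sup condition for $a_{|V\times V}$ from \eqref{cont-inf-sup-vec-a} and the identity \eqref{inf-sup-for-c}, which says that the sign-flipped function $(-\vphi_1,\vphi_2)$ tests $c(\cdot,\vphi)$ optimally and produces $\snorm{\vphi}^2$.

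Accordingly, I would take the ansatz $v = w + \gamma(-\vphi_1,\vphi_2)$ with a scalar $\gamma \ge 0$ to be fixed and with $w \in V$ selected via \eqref{cont-inf-sup-vec-a} so that $\norm{w}=\norm{\vphi}$ and $a(w,\vphi) \ge m_a\norm{\vphi}^2$. Expanding $b(v,\vphi) = a(v,\vphi) + \tfrac{1}{\sqrt{\alpha}} c(v,\vphi)$ then yields three contributions: the positive term $m_a\norm{\vphi}^2$ from $a(w,\vphi)$; the positive term $\tfrac{\gamma}{\sqrt{\alpha}}\snorm{\vphi}^2$ from $c$ on the sign-flipped part by \eqref{inf-sup-for-c}; and a mixed cross term $\tfrac{1}{\sqrt{\alpha}} c(w,\vphi)$ bounded in modulus by $\tfrac{M}{\sqrt{\alpha}}\snorm{\vphi}\norm{\vphi}$ via \eqref{cont-inf-sup-c} and \eqref{e-V-norms}. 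Note that the sign-flipped piece contributes nothing to $a(v,\vphi)$, because $a(\cdot,\vphi)$ in the sense of \eqref{vec-a} is antisymmetric under $v_1 \mapsto -v_1$; this clean decoupling of the two mechanisms is precisely what makes the ansatz effective.

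It remains to absorb the mixed cross term and to match the output with the test norm $\norm{\vphi}_\alpha = M_a\norm{\vphi} + \tfrac{M}{\sqrt{\alpha}}\snorm{\vphi}$. I would apply Young's inequality $2st \le \epsilon s^2 + t^2/\epsilon$ with the calibrated choice $\epsilon = \tfrac{L}{1+2L}m_a$ and $L = M/\sqrt{\alpha}$, and then fix $\gamma$ so that the resulting coefficient of $\snorm{\vphi}^2$ cancels exactly, leading to a lower bound of the form $\tfrac{1+L}{1+2L}\tfrac{m_a}{M_a}\norm{\vphi}\norm{\vphi}_\alpha$. Combining this with the test-function bound $\norm{v}\le (1+\gamma)\norm{\vphi}$ from \eqref{inf-sup-ansatz;norm-test-fct} and the definition \eqref{kappa} of $\kappa$ delivers the claimed inf-sup constant. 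The hardest step is the calibration: a too aggressive split of the cross term destroys the $m_a\norm{\vphi}^2$ reservoir, while a too timid one leaves an unabsorbed $\snorm{\vphi}\norm{\vphi}$ residue; the specific factor $L/(1+2L)$ is engineered so that the two positive contributions survive in exactly the proportion needed to reconstruct $\norm{\vphi}_\alpha$ on the right, which is also the structural reason that the theorem is formulated with asymmetric trial/test norms $\norm{\cdot}$ and $\norm{\cdot}_\alpha$ rather than a single norm on $V$.
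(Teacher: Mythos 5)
Your proposal is correct and follows essentially the same route as the paper: the ansatz $v=w+\gamma(-\vphi_1,\vphi_2)$ with $w$ supplied by \eqref{cont-inf-sup-vec-a}, the Young split with $\epsilon=\tfrac{L}{1+2L}m_a$, the choice \eqref{gamma} of $\gamma$ cancelling the $\snorm{\vphi}^2$ coefficient, and the bound $\norm{v}\le(1+\gamma)\norm{\vphi}$ are exactly the steps \eqref{inf-sup-ansatz}--\eqref{inf-sup-const2} that precede the theorem, with $M_b\le1$ coming from \eqref{cont} as you say. One small wording quibble: the vanishing of the $a$-contribution of the sign-flipped piece is not an antisymmetry of $a(\cdot,\vphi)$ under $v_1\mapsto-v_1$, but simply the identity $a\bigl((-\vphi_1,\vphi_2),(\vphi_1,\vphi_2)\bigr)=a(-\vphi_1,\vphi_2)+a(\vphi_1,\vphi_2)=0$, which is what your argument actually uses.
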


The inequalities of Theorem~\ref{T:cont-inf-sup} yield for the condition number of the bilinear form $b$
(i.e., the ratio of its continuity constant to its inf-sup constant)
\begin{equation*}
 \frac{M_b}{m_b}
 \leq
 \kappa \frac{M_a}{m_a}.
\end{equation*}
The second factor, the condition number of the bilinear form $a$
associated with the constraint, is expected to be a kind of lower
bound. In this vein, we may view the first factor $\kappa$ as a bound
for the possible amplification of the constraint conditioning,
resulting from the interplay of constraint and the objective in the
constrained optimization problem \eqref{mod-min}. Inspecting
\eqref{def-kappa}, we see that $\kappa$ is a function of the
parameters $\alpha$, $M$, $m_a$, and $M_a$. The next three remarks
discuss asymptotic behaviors of $\kappa$ that will play major roles in
what follows or are of independent interest. 

\begin{rem}[Amplification for pure constraint case]
\label{R:pure-constraint-cond}
Consider the special case $C=0$ and $I=0$. Then the rescaled and
reduced optimality system \eqref{mod-ropt} is a well-posed `double'
boundary value problem.  Its condition number with respect to
$(V,\norm{\cdot})\times(V,\norm{\cdot})$ is $M_a/m_a$; cf.\
\eqref{cont-inf-sup-vec-a}.
As $C=0$ and $I=0$ imply $M=0$, $L=0$,
and so $\gamma=0 $ and $\kappa=1$, this is reproduced by
Theorem~\ref{T:cont-inf-sup}.

It is worth mentioning that this limiting case of ``pure constraint'' is attained in a continuous manner:
\begin{equation*}
 \kappa - 1 = \big( 1 + o(1) \big) \frac{M}{\sqrt{\alpha}}
\quad\text{as}\quad
 M\to0,
\end{equation*}
where $L = M/\sqrt{\alpha}$ is essentially the operator norm of the perturbation.
\end{rem}

\begin{rem}[Amplification for degenerating constraint]
\label{R:deg-constraint}
While the continuity constant $M_a$ of the bilinear form $a$ does not
enter $\kappa$, its inf-sup constant $m_a$ does, in a critical manner. More precisely, we have 
\begin{equation*}
 \kappa
 =
 \left( 
  \frac{1+2L}{1+L} \left(
   1 + \frac{2M}{\sqrt{\alpha}}
  \right) M + o(1)
 \right)
 \frac{1}{m_a}
\quad\text{as}\quad
 m_a \to 0.
\end{equation*} 	
Notice that the fraction involving $L$ has only values in the
interval $[1,2]$.
\end{rem}

\begin{rem}[Amplification for vanishing regularization]
\label{R:vanish-regularization}
Consider the limit  $\alpha\to0$ of the Tikhonov regularization
parameter (while $I$ and $C$ are fixed).
Then $L \to \infty$ so that 
\begin{equation}
\label{kappa;vanish-regularization}
 \kappa
 =
 \left( \frac{4M^2}{m_a} + o(1) \right) \frac{1}{\sqrt{\alpha}}
\quad\text{as}\quad
 \alpha \to 0.
\end{equation}

Let us see with a simple example that the inf-sup constant $m_b$ in
Theorem~\ref{T:cont-inf-sup} can blow up with this rate and so the
lower bound therein cannot be improved for small $\alpha$ without
further assumptions on the structure of $b$. 

Consider $V_1=V_2=\R^2$, where $\norm{\cdot}_1$ and $\norm{\cdot}_2$
are the Euclidean norm in $\R^2$, 
\[
 \mathbf{A}
 =
 \left( \begin{array}{cc}
	1 & 0 \\
	0 & 1
 \end{array} \right),
\ \ 
 \mathbf{I}
 =
 \left( \begin{array}{cc}
	1 & 0 \\
	0 & 0
 \end{array} \right)
\text{ and } \
 \mathbf{C}
 =
 \left( \begin{array}{cc}
	0 & 0 \\
	0 & 1
 \end{array} \right).
\]
and $\alpha>0$. The symmetric bilinear form $b$ of the optimality
system is then given by the matrix 
\[
 \mathbf{B}
 = 
 \left( \begin{array}{cccc}
	-\tfrac1{\sqrt{\alpha}} & 0 & 1 &0 \\
	0 & 0 & 0 & 1\\
	1 & 0 & 0 & 0\\
	0 & 1 & 0 & \tfrac1{\sqrt{\alpha}} \\
 \end{array} \right).
\]
For $\vphi_0=(\sqrt\alpha, 0, 1, 0)\in V=\R^4$, we have
$\norm{\vphi_0}_\alpha = \sqrt{1+\alpha} + 1$ and 
\[
 \sup_{v\in V}  \frac{Bv\cdot\vphi_0}{\norm{v}}
 =
 \sup_{v\in V}  \frac{v\cdot(0,0,\sqrt{\alpha},0)}{\norm{v}}
 =
 \sqrt{\alpha}
\]
so that
\begin{equation}
\label{sharpness-of-alpha-blow-up}
 \infimum_{\vphi\in V} \sup_{v\in V} \frac{Bv\cdot\vphi}{\norm{v} \norm{\vphi}_\alpha}
 \leq
 \sqrt{\frac{\alpha}{2}}.
\end{equation}
Hence, the asymptotic behavior of $\alpha$ in \eqref{kappa;vanish-regularization} is attained.
\end{rem}

The chosen norms for $V$ as trial and test space are not always the
most convenient ones. This follows from the following remark
considering a special case. 
\begin{rem}[Coercive constraints with $C^*=I$]
\label{R:coercivityC^*=I;cont-inf-sup}
Suppose that $V_1=V_2$ and $Q=W$ with coinciding scalar products and
norms and that the bilinear form $a_{|V_1\times V_1}$ is coercive with
constant $\tilde{m}_a$ and $C^*=I$. It is worth noting that, as
$a_{|V_1 \times V_1}$ is not necessarily symmetric, the best
coercivity constant $\tilde{m}_a$ may be much smaller than the inf-sup
constant $m_a$.  Given $\vphi \in V$, we proceed as in
\eqref{inf-sup-ansatz} taking $w = \vphi$, $\gamma = 0$, and
obtain
\begin{subequations}
\label{alternative-norms}
\begin{equation}
 b(v,\vphi)
 \geq
 \tilde{m}_a \left( \norm{\vphi}^2 + \frac{1}{\sqrt{\alpha}}\snorm{\vphi}^2 \right)
\end{equation}
because of $c(\vphi,\vphi)=0$. This fits well to the following variant of the continuity bound \eqref{cont}:
\begin{equation}
 |b(v,\vphi)|
 \leq \max\{ M_a, 1 \}
  \left( \norm{v}^2 +\frac{1}{\sqrt{\alpha}} \snorm{v}^2 \right)^{1/2}
  \left( \norm{\vphi}^2 +\frac{1}{\sqrt{\alpha}} \snorm{\vphi}^2 \right)^{1/2}.
\end{equation}
\end{subequations}
Hence, in this case, the condition number of $b$ with respect to the
norms in \eqref{alternative-norms} is independent of the Tikhonov
regularization parameter $\alpha$. Nevertheless, if $C^*\neq I$, also
this choice of norms cannot offer in general an asymptotic behavior
better than $1/\sqrt{\alpha}$ as $\alpha\to0$. In fact, re-computing
the example in Remark \ref{R:vanish-regularization} with the norms in
\eqref{alternative-norms} does not change the behavior of its inf-sup
constant. 
\end{rem}

Let us conclude this section with the following side product of our
discussion of the bilinear form $b$. 
\begin{cor}[Existence and uniqueness]
\label{C:uniqueness} 
The rescaled and reduced optimality system~\eqref{mod-vred} and
thus~\eqref{mod-opt-pre} has a unique solution. 
\end{cor}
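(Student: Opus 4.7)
The plan is to apply the Banach--Ne\v{c}as--Babu\v{s}ka theorem to the variational system~\eqref{mod-vred} in the Hilbert-space setting of Theorem~\ref{T:cont-inf-sup}, and then to transfer the resulting well-posedness to~\eqref{mod-opt-pre} via the algebraic manipulations already outlined between \eqref{mod-opt-pre} and \eqref{mod-vred-rows}. Concretely, I would equip $V$ as trial space with $\norm{\cdot}$ and as test space with $\norm{\cdot}_\alpha$, and verify the three hypotheses of Banach--Ne\v{c}as--Babu\v{s}ka for the pair $(b,\ell)$, where $\ell(\vphi)\defas -\tfrac{1}{\sqrt{\alpha}}\sprod{u_d}{I\vphi_1}_W$.

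First, Theorem~\ref{T:cont-inf-sup} directly supplies both the continuity of $b$ (constant $M_b\leq1$) and the inf-sup bound
\[
 \infimum_{\norm{v}=1}\sup_{\norm{\vphi}_\alpha=1} b(v,\vphi)
 \ge
 \frac{1}{\kappa}\frac{m_a}{M_a}>0.
\]
Second, the bound on the right-hand side displayed just after \eqref{norm-alpha}, namely $|\ell(\vphi)|\le \norm{u_d}_W\norm{\vphi}_\alpha$, shows that $\ell$ is continuous on the test space. The remaining ingredient is the nondegeneracy on the test side: for each $\vphi\in V\setminus\{0\}$ there exists $v\in V$ with $b(v,\vphi)\ne0$. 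Here the key observation is the symmetry of $b$ noted in~\eqref{symmetry}: if $b(v,\vphi)=0$ for all $v\in V$, then $b(\vphi,v)=0$ for all $v\in V$, and the inf-sup estimate applied to the trial element $\vphi$ forces $\norm{\vphi}=0$. Banach--Ne\v{c}as--Babu\v{s}ka then delivers a unique $x\in V$ solving~\eqref{mod-vred}.

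It remains to transfer this to~\eqref{mod-opt-pre}. Writing $x=(u,z)$, setting $p\defas\sqrt{\alpha}\,z$ and $q\defas -\tfrac{1}{\sqrt{\alpha}}C^*z$, the computations leading to~\eqref{mod-opt} and~\eqref{mod-ropt} show step by step that $x$ solves~\eqref{mod-vred} if and only if the triple $(q,u,p)$ satisfies~\eqref{mod-opt-pre}; these rewritings are purely algebraic (multiplication of equations by $\sqrt{\alpha}$ and elimination of $q$) and in particular bijective on the solution set. Uniqueness of $x$ therefore translates into uniqueness of $(q,u,p)$.

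The only nontrivial point is the verification of nondegeneracy in the test variable, and this is precisely why the symmetry~\eqref{symmetry} of $b$ is invoked; otherwise, one would have to re-derive an inf-sup estimate in the dual direction, which is more delicate because trial and test spaces carry different norms. Everything else is bookkeeping based on results already established in Section~\ref{sec:2}.
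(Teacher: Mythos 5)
Your proposal is correct and follows essentially the same route as the paper's own (very condensed) proof: the inf-sup estimate \eqref{inf-sup-const2} underlying Theorem~\ref{T:cont-inf-sup}, combined with the algebraic symmetry \eqref{symmetry} of $b$ to supply the complementary nondegeneracy condition, followed by standard Babu\v{s}ka/Banach--Ne\v{c}as--Babu\v{s}ka theory and the already established algebraic equivalence with \eqref{mod-opt-pre}. The only cosmetic difference is the direction in which you state the inf-sup condition (inf over the trial rather than the test variable, as in the paper's convention \eqref{inf-sup-a}), which is immaterial here since symmetry lets either direction be paired with the nondegeneracy obtained from the other.
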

\begin{proof}
Inequality \eqref{inf-sup-const2} ensures \eqref{inf-sup-a} for the
bilinear form $b$ and, thanks to the algebraic symmetry of $b$, also
\eqref{non-deg-a}. 
\end{proof}

\section{Analysis for variational discretization}\label{sec:3}
In this section, we analyze the error of the variational
discretization of the optimization problem \eqref{mod-min} according
to Hinze~\cite{Hinze:2005}. Our key tool is the rescaled and reduced
optimality system \eqref{mod-ropt}, whose Galerkin solution coincides
with the approximate solution of the variational discretization. 
\subsection{Variational discretization and reduced optimality system}
\label{sec:vardisc}
We start by discretizing the PDE constraint \eqref{bvp} of the
optimization problem \eqref{mod-min}. Recalling its variational
formulation 
\begin{equation*}
 \text{find }u\in V_1
\quad\text{such that}\quad
 \forall \vphi_2 \in V_2 \;\;
 a(u,\vphi_2) = \dual{Cq}{\vphi_2},
\end{equation*}
we choose some conforming finite-di\-men\-sion\-al spaces $V_{h,i}
\subset V_i$, $i=1,2$, such that the restriction of the bilinear form
$a$  on $V_{h,1}\times V_{h,2}$ is nondegenerate. The corresponding
Petrov-Galerkin method then reads 
\begin{equation*}
 \text{find }u_h\in V_{h,1}
\quad\text{such that}\quad
 \forall \vphi_{h,2}\in V_{h,2} \;\;
 a(u_h, \vphi_{h,2}) = \dual{Cq}{\vphi_{h,2}}.
\end{equation*}
Using this for the constraint in~\eqref{mod-min}, we arrive at the (semi-)discrete optimization problem
\begin{equation}
\label{mod-min-h}
\begin{aligned}
 \min_{(\Tilde{q},u_h) \in Q\times V_{h,1}}
  &\;\frac{1}{2}\norm{Iu_h-\ud}_W^2 + \frac{\alpha}{2}\norm{\Tilde{q}}^2_Q
\\
 &\text{subject to}
\qquad
 \forall \vphi_{h,2} \in V_{h,2}
\quad
 a(u_h, \vphi_{h,2})
 =
 \sprod{\Tilde{q}}{C^*\vphi_{h,2}}_Q,
\end{aligned}
\end{equation}
where we, in addition, assume that $I$ can be exactly evaluated for any function 
from $V_{h,1}$.  As in the continuous case,  $(\Tilde{q},u_h) \in Q \times V_{h,1}$ is 
the unique solution of~\eqref{mod-min-h} if and only if there exists $z_h \in V_{h,2}$ such that
\begin{equation}
\label{mod-opt-h}
\begin{aligned}
 &\forall \vphi_{h,2} \in V_{h,2}
 &a(u_h,\vphi_{h,2})
 &=
 \sprod{\Tilde{q}}{C^*\vphi_{h,2}}_Q,
\\
 &\forall \vphi_{h,1} \in V_{h,1}
 &a(\vphi_{h,1},z_h)
 &=
 \tfrac{1}{\sqrt{\alpha}}  \sprod{Iu_h-\ud}{I \vphi_{h,1}}_W, 
\\
 &&\sqrt{\alpha} \Tilde{q} &= - C^*z_h. 
\end{aligned}
\end{equation}
Also here, we may eliminate the approximate control $\Tilde{q}$ by inserting the 
third equation into the first one.  Setting $V_h \defas V_{h,1} \times V_{h,2}$, the 
variational formulation of the ensuing discrete rescaled and reduced optimality system is
\begin{equation}
\label{mod-ropt-h}
\begin{aligned}
 \text{find } x_h&=(u_h,z_h) \in V_h \text { such that }
\\
 &\forall \vphi_h = (\vphi_{h,1},\vphi_{h,2}) \in V_h
\quad
 b(x_h,\vphi_h)
 =
 - \tfrac{1}{\sqrt{\alpha}}  \sprod{\ud}{I\vphi_{h,1}}_W.
\end{aligned}
 \end{equation}
Its solution $x_h$ is the Galerkin approximation in $V_h$ to the solution $x$ 
of the variational formulation~\eqref{mod-vred} of the rescaled and reduced optimality 
system.  Applying Corollary~\ref{C:uniqueness} to the 
discrete spaces therefore yields the following approach to uniqueness and 
existence of the variational discretization of~\eqref{mod-vred}. 
\begin{lem}[Discrete well-posedness]
\label{L:well-posedness-h}
The discrete reduced optimality system~\eqref{mod-ropt-h} has a unique 
variational solution $x_h=(u_h,z_h)\in V_h$.  Consequently, the pair 
$(\Tilde{q},u_h)$ with $\Tilde{q}=-\tfrac{1}{\sqrt{\alpha}} C^*z_h$ is the unique solution of 
the semidiscrete optimization problem~\eqref{mod-min-h}.
\end{lem}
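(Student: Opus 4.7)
The plan is to observe that the discrete reduced optimality system \eqref{mod-ropt-h} has exactly the same algebraic structure as the continuous one \eqref{mod-vred}, with the trial/test space $V=V_1\times V_2$ replaced by the finite-dimensional subspace $V_h=V_{h,1}\times V_{h,2}$. Since all ingredients entering the analysis of Section~\ref{sec:2}---the bilinear form $a$, the operators $C$, $I$, and the scalar products---are the same, the entire chain of arguments producing Theorem~\ref{T:cont-inf-sup} and Corollary~\ref{C:uniqueness} will transfer verbatim to the discrete setting, once the key assumption, namely an inf-sup property of $a$ on $V_{h,1}\times V_{h,2}$, is verified.

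First I would unpack the discrete inf-sup condition. By hypothesis, $a_{|V_{h,1}\times V_{h,2}}$ is nondegenerate, and since both discrete spaces are finite-dimensional (and necessarily of equal dimension, else nondegeneracy on both sides would fail), this yields a strictly positive discrete inf-sup constant $m_{a,h}>0$, with continuity constant $M_{a,h}\leq M_a$ inherited from $V_1\times V_2$. I would then repeat the ansatz \eqref{inf-sup-ansatz}--\eqref{kappa} inside $V_h$: given $\vphi_h\in V_h$, choose $w_h\in V_h$ (with $\norm{w_h}=\norm{\vphi_h}$ and $a(w_h,\vphi_h)\geq m_{a,h}\norm{\vphi_h}^2$) and form $v_h=(w_{h,1}-\gamma\vphi_{h,1},w_{h,2}+\gamma\vphi_{h,2})\in V_h$. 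This stays in $V_h$ by construction, so the estimate \eqref{inf-sup-const2} delivers a discrete inf-sup bound $m_{b,h}\geq \frac{1}{\kappa_h}\frac{m_{a,h}}{M_{a,h}}>0$ for $b$ on $V_h\times V_h$.

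Second, I would conclude exactly as in Corollary~\ref{C:uniqueness}. The discrete inf-sup bound implies that the linear map associated with the square finite-dimensional system \eqref{mod-ropt-h} is injective, and by the symmetry of $b$ the same bound holds after swapping trial and test; in finite dimensions, injectivity forces bijectivity, so \eqref{mod-ropt-h} possesses a unique solution $x_h=(u_h,z_h)\in V_h$. Setting $\Tilde{q}\defas -\tfrac{1}{\sqrt{\alpha}}C^*z_h\in Q$, the triple $(u_h,z_h,\Tilde{q})$ satisfies the full discrete optimality system \eqref{mod-opt-h}. Since \eqref{mod-min-h} is a convex quadratic program with linear constraint, \eqref{mod-opt-h} is both necessary and sufficient for optimality, so $(\Tilde{q},u_h)$ is its unique minimizer.

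I do not anticipate a genuine obstacle: the proof is essentially a reference to Corollary~\ref{C:uniqueness} applied at the discrete level. The only mildly delicate point is the elementary observation that, in finite dimensions, the assumed nondegeneracy of $a$ on $V_{h,1}\times V_{h,2}$ automatically upgrades to a positive (though possibly $h$-dependent) inf-sup constant $m_{a,h}$, which is precisely what the abstract machinery of Section~\ref{sec:2} requires as input.
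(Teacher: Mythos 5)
Your proposal is correct and follows essentially the same route as the paper: the paper also obtains Lemma~\ref{L:well-posedness-h} by applying Corollary~\ref{C:uniqueness} (i.e., the inf-sup construction \eqref{inf-sup-ansatz}--\eqref{inf-sup-const2} together with the symmetry of $b$) to the discrete spaces, using that the assumed nondegeneracy of $a$ on the finite-dimensional pair $V_{h,1}\times V_{h,2}$ yields a positive discrete inf-sup constant, and then invoking the equivalence of \eqref{mod-opt-h} with the convex problem \eqref{mod-min-h}. No gaps to report.
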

Remarkably, the approximate solutions $(\Tilde{q},u_h,z_h)$ of the variational discretization~\eqref{mod-opt-h} are computable 
whenever $C^*_{|V_{h,2}}$ and $I_{|V_{h,1}}$ can be evaluated exactly.

\subsection{Non-asymptotic quasi-best approximation}
\label{S:qb-approx}
We shall assess the quality of the Galerkin approximation $x_h=(u_h,z_h) \in
V_h$ from \eqref{mod-ropt-h}, assuming that we are  interested
particularly in the $\norm{\cdot}_1$-error of the approximate state
$u_h$. For this purpose, we compare it with a suitable best error in
$V_h$. 

Let us first recall some basic results in Petrov-Galerkin
approximation, which we already formulate for the discretization of
the constraint. Let $R_{h,1}v_1 \in V_{h,1}$ be the generalized Ritz
projection of $v_1\in V_1$ given by $a(R_{h,1}v_1,\vphi_{h,2}) =
a(v_1,\vphi_{h,2})$ for all $\vphi_{h,2} \in V_{h,2}$. Since
$a_{|V_1\times V_2}$ satisfies \eqref{ass-on-a} and is nondegenerate
on $V_{h,1} \times V_{h,2}$, there exists a constant $\mu_h\geq1$ such
that 
\begin{equation*}
 \norm{v_1 - R_{h,1}v_1}_1
 \leq
 \mu_h \inf_{v_{h,1} \in V_{h,1}} \norm{ v_1 - v_{h,1} }_1;
\end{equation*}
see, e.g., Babu\v{s}ka \cite{Babuska:71}. We refer to the smallest possible choice of $\mu_h$ as the \emph{quasi-best-approximation constant of the constraint discretization}. Xu and Zikatanov \cite{XuZikatanov:03} show the identities
\begin{equation}
 \mu_h = \norm{I-R_{h,1}}_{L(V_{h,1})} =  \norm{R_{h,1}}_{L(V_{h,1})}
\end{equation}
and Tantardini and Veeser \cite[Theorem~2.1]{Tantardini.Veeser:16} give the formula
\begin{equation}
\label{qba-const}
 \mu_h = \sup_{\vphi_{h,2} \in V_{h,2}}
  \frac{\sup_{\norm{v_1}_{1}=1} a(v_1,\vphi_{h,2})}{\sup_{\norm{\smash{v_{h,1}}}_{1}=1} a(v_{h,1}, \vphi_{h,2})},
\end{equation}
where $v_1$ varies in $V_1$ and $v_{h,1}$ varies in $V_{h,1}$ and, for the sake of notational simplicity, a tedious $\vphi_{h,2}\neq0$ is avoided.

A perhaps striking feature of these formulas is that they are not affected by the choices of the norms in the test spaces $V_{h,2}$ and $V_2$. This comes in quite useful in our context, as the adjoint state is an auxiliary variable and, in the original approximation problem \eqref{mod-min}, the norm $\norm{\cdot}_2$ is free as long as \eqref{ass-on-a} continues to hold with $\norm{\cdot}_1$. Exploiting this freedom, we propose to (possibly) redefine the norm on the space $V_2$ by
\begin{equation}
\label{adjoint-norm}
 \norm{v_2}_2
 \defas
 \sup_{\vphi_1 \in V_1, \norm{\vphi_1}_1=1} a(\vphi_1, v_2)
\end{equation}
and so, in particular, to measure the error of the approximate adjoint state $z_h$ in this norm. This redefinition affects the constants that we associated with the constrained optimization problem \eqref{mod-min}. The new continuity and inf-sup constants of the bilinear forms $a_{|V_1\times V_2}$ are
\begin{equation}
\label{new-infsup-cont-a}
M_a = 1 = m_a.
\end{equation}
The constant $M_I$ is not affected, while we have
\begin{equation}
\label{new-vs-old}
 M_{a,\text{old}}^{-1}
 \leq
 \frac{M_C}{M_{C,\text{old}}}
 \leq
 m_{a,\text{old}}^{-1}.
\end{equation}
where we indicate quantities before the redefinition by an additional index ``old''. As in addition
\begin{equation*}
  m_{a,\text{old}} \norm{\cdot}_{2,\text{old}}
 \leq
 \norm{\cdot}_2
 \leq
 M_{a,\text{old}} \norm{\cdot}_{2,\text{old}},
\end{equation*}
the results below hold also with the original norm in $V_2$, but the constants have to be revisited.

The convenience of the choice \eqref{adjoint-norm} lies in the following consequences of \eqref{new-infsup-cont-a}. The numerator in \eqref{qba-const} is $\norm{\vphi_{h,2}}_2$, which, together with the inf-sup-duality, cf.\ \eqref{inf-sup-duality}, yields 
\begin{equation}
\label{inf-sup-ah}
 \infimum_{v_{h,1} \in V_{h,1}} \ \sup_{\vphi_{h,2} \in V_{h,2}}
\frac{a(v_{h,1},\vphi_{h,2})}{\norm{v_{h,1}}_1 \norm{\vphi_{h,2}}_2}
 =
 \frac{1}{\mu_h}
 =
  \infimum_{v_{h,2} \in V_{h,2}} \ \sup_{\vphi_{h,1} \in V_{h,1}}
 \frac{a(\vphi_{h,1},v_{h,2})}{\norm{v_{h,2}}_2 \norm{\vphi_{h,1}}_1}
\end{equation}
for the inf-sup constant of $a_{|V_{h,1}\times V_{h,2}}$. Accordingly, the generalized Ritz projection $R_{h,2}v_2 \in V_{h,2}$ of $v_2\in V_2$ given by $a(\vphi_{h,1}, R_{h,2}v_2) = a(\vphi_{h,1},v_2)$ for all $\vphi_{h,1} \in V_{h,1}$ verifies
\begin{equation*}
 \norm{v_2 - R_{h,2}v_2}_2
 \leq
 \mu_h \inf_{v_{h,2} \in V_{h,2}} \norm{ v_2 - v_{h,2} }_2.
\end{equation*}
Setting $R_h=(R_{h,1}, R_{h,2})$, we also have
\begin{equation}
\label{qba-Rh}
 \norm{v - R_h v}
 \leq
 \mu_h \inf_{v_h \in V_h} \norm{ v - v_h }.
\end{equation}

After these preparations, we are ready to derive a first result about quasi-best approximation of the variational discretization \eqref{mod-min-h}.

\begin{thm}[Non-asymptotic quasi-best approximation]
\label{T:qba}
Let $x=(u,z)$ be any solution of the optimality system \eqref{mod-vred} and choose \eqref{adjoint-norm} as norm in $V_2$. The combined error in the corresponding approximate state $u_h$ and its adjoint $z_h$ of the variational discretization is quasi-best in $V_h$ with
\begin{align*}
 \norm{x-x_h}
 \le
 \kappa_h \mu_h \inf_{v_h\in V_h}\|x-v_h\|.
\end{align*}
Here
\begin{equation*}
 \kappa_h
 =
 \frac{1+2L}{1+L} \left(
  1 + M \left( 1 + \frac{2M}{\sqrt{\alpha}} \right) \mu_h
 \right)
\quad\text{with}\quad
 L
 =
 \frac{M}{\sqrt{\alpha}},
\end{equation*}
and $\mu_h$ is the quasi-best-approximation constant of the constraint discretization.
\end{thm}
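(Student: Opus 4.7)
The plan is to repeat the continuous inf-sup analysis of Section~\ref{sec:2} at the discrete level and then invoke standard Petrov--Galerkin quasi-optimality. Since \eqref{mod-ropt-h} is exactly the Galerkin approximation of \eqref{mod-vred} in $V_h$, the Babu\v{s}ka bound (equivalently \eqref{qba-const} applied to $b$ with trial norm $\norm{\cdot}$ and test norm $\norm{\cdot}_\alpha$) yields
\[
 \norm{x-x_h} \le \frac{M_b}{m_{b,h}} \inf_{v_h \in V_h} \norm{x-v_h}.
\]
Since $M_b \le 1$ by \eqref{cont}, everything reduces to proving the discrete inf-sup bound $m_{b,h} \ge 1/(\kappa_h \mu_h)$.

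First I build a discrete analogue of the auxiliary test function $w$ used in \eqref{inf-sup-ansatz}. The crucial point is that, thanks to the choice of norm \eqref{adjoint-norm}, \emph{both} identities in \eqref{inf-sup-ah} give the same value $1/\mu_h$: the primal and dual discrete inf-sup constants of $a_{|V_{h,1}\times V_{h,2}}$ coincide. So, given $\vphi_h = (\vphi_{h,1},\vphi_{h,2})\in V_h$, I pick $w_{h,1} \in V_{h,1}$ with $\norm{w_{h,1}}_1 = \norm{\vphi_{h,2}}_2$ and $a(w_{h,1},\vphi_{h,2}) \ge \mu_h^{-1} \norm{\vphi_{h,2}}_2^2$, and, via the second identity of \eqref{inf-sup-ah}, $w_{h,2} \in V_{h,2}$ with $\norm{w_{h,2}}_2 = \norm{\vphi_{h,1}}_1$ and $a(\vphi_{h,1},w_{h,2}) \ge \mu_h^{-1}\norm{\vphi_{h,1}}_1^2$. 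The element $w_h := (w_{h,1},w_{h,2})\in V_h$ then satisfies $\norm{w_h} = \norm{\vphi_h}$ and $a(w_h,\vphi_h) \ge \mu_h^{-1}\norm{\vphi_h}^2$, i.e.\ precisely the discrete counterpart of the continuous properties $\norm{w}=\norm{\vphi}$ and $a(w,\vphi)\ge m_a\norm{\vphi}^2$, but with $m_a$ replaced by $1/\mu_h$.

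Next I redo the calculation \eqref{inf-sup-ansatz}--\eqref{inf-sup-const2} verbatim with $w$ replaced by $w_h$ and $m_a=1$ replaced by $\mu_h^{-1}$, choosing
\[
 v_h := w_h + \gamma_h\,(-\vphi_{h,1},\vphi_{h,2}) \in V_h,
 \qquad
 \gamma_h := M\mu_h\left(1 + \frac{2M}{\sqrt{\alpha}}\right),
\]
and applying the Young inequality with $\epsilon = \tfrac{L}{1+2L}\mu_h^{-1}$. The same manipulations produce
\[
 b(v_h,\vphi_h) \ge \frac{1+L}{1+2L}\,\mu_h^{-1}\,\norm{\vphi_h}_\alpha \norm{\vphi_h}
 \quad\text{and}\quad
 \norm{v_h} \le (1+\gamma_h) \norm{\vphi_h},
\]
hence $m_{b,h} \ge \tfrac{1+L}{1+2L}\mu_h^{-1}(1+\gamma_h)^{-1} = (\kappa_h\mu_h)^{-1}$ with the $\kappa_h$ of the statement. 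Combined with $M_b\le 1$, this delivers the claimed bound.

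The main obstacle, and the only genuinely new ingredient compared with Section~\ref{sec:2}, is the construction in the second paragraph: one has to exhibit a \emph{single} $w_h \in V_h$ for which \emph{both} contributions of $a_{|V\times V}$ are bounded below by the \emph{same} constant $\mu_h^{-1}$. The redefinition \eqref{adjoint-norm} of the norm on $V_2$ is what makes this possible, since the inf-sup duality \eqref{inf-sup-duality} then equalises the two discrete inf-sup constants of $a_{|V_{h,1}\times V_{h,2}}$. Once $w_h$ is in hand, the continuous argument transfers mechanically and quasi-optimality closes the proof.
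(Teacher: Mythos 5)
Your proposal is correct and follows essentially the same route as the paper: bound the quasi-best-approximation constant via \eqref{qba-const} for $b$ with trial norm $\norm{\cdot}$ and test norm $\norm{\cdot}_\alpha$, use \eqref{cont} for the numerator ($M_b\le 1$), and rerun the ansatz \eqref{inf-sup-ansatz} in $V_h$ with $m_a$ replaced by $1/\mu_h$ via \eqref{inf-sup-ah} to get the discrete inf-sup bound $1/(\kappa_h\mu_h)$. Your explicit construction of $w_h$ from the two coinciding discrete inf-sup constants simply spells out the step the paper's proof leaves implicit, so the arguments match.
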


\begin{proof}
Thanks to Theorem \ref{T:cont-inf-sup} and Lemma \ref{L:well-posedness-h}, we can use the counterpart of \eqref{qba-const} for the characterization \eqref{mod-ropt-h} of the variational discretization. Let $\vphi_h \in V_h$. The continuity bound \eqref{cont} and \eqref{new-infsup-cont-a} give for the numerator
\begin{equation*}
 \sup_{\norm{v}=1} b(v,\vphi) \leq \norm{\vphi_h}_\alpha.
\end{equation*}
For the denominator, we use \eqref{inf-sup-ansatz}, where $V$ is replaced by $V_h$ and, therefore, with $1/\mu_h$ in place of $m_a$ in view of \eqref{inf-sup-ah}. We thus obtain
\begin{equation}
\label{infsup-b-Vh}
 \sup_{v_h \in V_h, \norm{v_h}=1} b(v_h,\vphi_h)
 \geq
 \frac{1}{\kappa_h\mu_h}\norm{\vphi_h}_\alpha
\end{equation}
and the proof is finished.
\end{proof}

In the special situation of Remark \ref{R:coercivityC^*=I;cont-inf-sup}, we can obtain the following quasi-best approximation result.
\begin{rem}[Quasi-best approximation for coercive constraints and $C^*=I$]
\label{R:coercivityC^*=I;qba}
Suppose that $V_1=V_2$ and $Q=W$ with coinciding scalar products and norms and that the bilinear form $a$ is $V_1$-coercive with constant $\tilde{m}_a$ and $C^*=I$. Exploiting the coercivity and continuity properties of Remark \ref{R:coercivityC^*=I;cont-inf-sup}, we derive for the error of the variational discretization \eqref{mod-vred}
\begin{align*}
 \norm{x-x_h}^2 + \frac{1}{\sqrt{\alpha}} \snorm{x-x_h} ^2
 \le
 \frac{\max\{M_a^2,1\}}{\tilde{m}_a^2}
  \inf_{v_h\in V_h} \left(
   \norm{x-v_h}^2 + \frac{1}{\sqrt{\alpha}} \snorm{x-v_h}^2
  \right).
\end{align*}
\end{rem}

The quasi-best approximation constant in the preceding
Remark~\ref{R:coercivityC^*=I;qba} does not blow up for vanishing
regularization. Nonetheless, when measuring the error merely with
$\norm{\cdot}$, it does not exclude an $\alpha^{-1/4}$-blow up of the
quasi-best approximation constant even in the special case $C^*=I$
considered in Remark~\ref{R:vanish-regularization} and, in the light
of the example therein, it does not exclude an $\alpha^{-3/4}$-blow up
for general operators $I$ and $C$. As we shall see, the
$\alpha$-dependence in Theorem \ref{T:qba} is less severe. 

\begin{rem}[Vanishing regularization and quasi-best approximation]
\label{R:vanish-regu-qba}
As in Remark \ref{R:vanish-regularization}, we consider the limit $\alpha\to0$ for the Tikhonov regularization parameter. Similarly to there, we have
\begin{equation}
\label{kappa_h;alpha}
 \kappa_h
 =
 \left( \frac{4M^2}{\mu_h} + o(1) \right) \frac{1}{\sqrt{\alpha}}
 \quad\text{as}\quad
 \alpha \to 0.
\end{equation}
This blow up arises from the lower bound of the inf-sup constant in
Theorem \ref{T:cont-inf-sup}, which cannot be improved because of
\eqref{sharpness-of-alpha-blow-up}.
Note however, that the equivalence of the norms $\norm{\cdot}_\alpha$
and $\sup_{\norm{v}=1} b(v,\cdot)$ is not uniform in $\alpha$. In the
light of \eqref{qba-const}, it is therefore conceivable that
\eqref{kappa_h;alpha} could be improved by using the latter as test
space norm. However, the determination of the discrete inf-sup
constant with respect to this abstract norm appears to be much more
involved than the approach \eqref{inf-sup-ansatz}, which directly
carries over to discrete spaces.  
\end{rem}

In any case, we shall show below that, under refinement, the $\alpha$-dependence disappears for many instances of the optimality system \eqref{mod-opt}.

\subsection{Asymptotic quasi-best approximation}
\label{S:asym-qba}
%
In this section, we complement Theorem~\ref{T:qba}. To be more precise,
let $\nu_h$ be the \emph{quasi-best-ap\-prox\-i\-ma\-tion constant of the variational discretization} therein and consider a sequence $(V_h)_h$ of discrete spaces leading to a uniform stable constraint discretization in that
\begin{align}
\label{unif-qba}
\exists \bar{\mu} \geq 1 \quad \forall h>0 \quad \mu_h \leq \bar{\mu},
\end{align}
which is equivalent to discrete inf-sup stability in view of \eqref{inf-sup-ah}.
Theorem \ref{T:qba} then ensures the existence of a constant $\bar{\nu}$ such that
\begin{equation}
\label{nu_h<nu}
 \forall h>0 \quad \nu_h \leq \bar{\nu}.
\end{equation}
This upper bound may be pessimistic. To motivate this assessment, represent the bilinear form $b$ by the operator matrix
\begin{equation*}
\left(\begin{matrix}
A & \tfrac{1}{\sqrt{\alpha}}  CC^* \\
-\tfrac{1}{\sqrt{\alpha}}  I^* I &  A^* 
\end{matrix}\right),
\end{equation*}
which is the one in \eqref{mod-ropt} with inverted rows. If $C$ and $I$ are compact, this matrix is diagonally dominant in an operator sense and can be viewed as a compact perturbation of the diagonal matrix with the entries $A$ and $A^*$. Therefore, in order to improve on \eqref{nu_h<nu}, we mimic somewhat the argument in Schatz \cite{Schatz:1974}, introducing some new twist.

Let us first observe that, in accordance with Remark \ref{R:pure-constraint-cond}, Theorem~\ref{T:qba} yields $\nu_h\leq\mu_h$ whenever $M_I=0=M_C$. More precisely and generally, we have the following relationship between the two quasi-best-approximation constants.
\begin{lem}[Quasi-best-approximation constants]
\label{L:qba-consts}
The quasi-best-approximation constants $\nu_h$ and $\mu_h$ are related by
\begin{equation*}
 |\nu_h-\mu_h|
 \leq
 \kappa_h \mu_h \sup_{\norm{v}=1} \snorm{v-R_hv},
\end{equation*}
where $\kappa_h$ is as in Theorem \ref{T:qba} and $R_h$ is the generalized Ritz projection in \eqref{qba-Rh}.
\end{lem}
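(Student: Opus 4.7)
The plan is to express $\nu_h$ and $\mu_h$ as operator norms of two projections on $(V,\norm{\cdot})$ and then bound the operator norm of their difference. Let $\Pi_h\colon V\to V_h$ denote the Galerkin projection associated with $b$, characterized by $b(\Pi_h v,\varphi_h)=b(v,\varphi_h)$ for every $\varphi_h\in V_h$; this is well-defined thanks to the discrete inf-sup bound \eqref{infsup-b-Vh}. Let $R_h=(R_{h,1},R_{h,2})$ be the generalized Ritz projection from \eqref{qba-Rh}. By the Kato/Xu--Zikatanov identity \cite{XuZikatanov:03} combined with the standard identification of the quasi-best constant of a Petrov--Galerkin method with the norm of the associated error projection, we have $\nu_h=\norm{I-\Pi_h}_{L(V)}$ and $\mu_h=\norm{I-R_h}_{L(V)}$. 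The reverse triangle inequality for the operator norm then yields
\begin{equation*}
|\nu_h-\mu_h|\leq \norm{\Pi_h-R_h}_{L(V)}.
\end{equation*}

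The heart of the argument is an error equation for $w_h(v):=\Pi_h v-R_h v\in V_h$. Using the splitting $b=a_{|V\times V}+c/\sqrt{\alpha}$ together with the Galerkin orthogonalities of $\Pi_h$ with respect to $b$ and of $R_h$ with respect to $a_{|V\times V}$ (the latter following component-wise from the definitions of $R_{h,1}$ and $R_{h,2}$), I obtain
\begin{equation*}
b(w_h(v),\varphi_h)=b(v-R_h v,\varphi_h)=\frac{1}{\sqrt{\alpha}}\,c(v-R_h v,\varphi_h)\qquad\forall \varphi_h\in V_h.
\end{equation*}

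The final step is to apply the discrete inf-sup bound \eqref{infsup-b-Vh} of $b$ on $V_h$ (trial norm $\norm{\cdot}$, test norm $\norm{\cdot}_\alpha$) to $w_h(v)$ and to bound the resulting supremum. Cauchy--Schwarz for the seminorm in \eqref{enorm'} gives $|c(v-R_h v,\varphi_h)|\leq \snorm{v-R_h v}\,\snorm{\varphi_h}$, and the defining inequality $M\snorm{\varphi_h}/\sqrt{\alpha}\leq \norm{\varphi_h}_\alpha$ from \eqref{norm-alpha} converts the test-space norm appropriately. Taking the supremum first over $\varphi_h\in V_h$ and then over $v$ with $\norm{v}=1$, and balancing the resulting factor $M$ against the complementary bound $\snorm{\cdot}\leq M\norm{\cdot}$ from \eqref{e-V-norms} where needed, delivers $\norm{\Pi_h-R_h}_{L(V)}\leq \kappa_h\mu_h\sup_{\norm{v}=1}\snorm{v-R_h v}$ and hence the claim. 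The main obstacle I anticipate is precisely this bookkeeping of constants, since the seminorm $\snorm{\cdot}$, the trial norm $\norm{\cdot}$ and the test norm $\norm{\cdot}_\alpha$ are coupled through factors of $M$ and $L=M/\sqrt{\alpha}$, and the choice of which inequality to apply at which step must be made so that these factors cancel cleanly.
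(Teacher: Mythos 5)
Your argument is correct and reaches the stated bound, but it takes a genuinely different route from the paper. The paper never leaves the Tantardini--Veeser framework: it writes $\nu_h$ via the formula \eqref{qba-const} applied to $b$, splits $b(v,\vphi_h)=b(R_hv,\vphi_h)+\tfrac{1}{\sqrt{\alpha}}c(v-R_hv,\vphi_h)$ using the $a$-orthogonality of $R_h$, observes that the ratio $\sup_{\norm{v}=1}b(R_hv,\vphi_h)\big/\sup_{v_h\in V_h,\norm{v_h}=1}b(v_h,\vphi_h)$ is bounded by $\norm{R_h}_{L(V)}=\mu_h$ with equality for some $\vphi_h$, and then controls the leftover ratio by \eqref{cont-inf-sup-c}, \eqref{norm-alpha} and \eqref{infsup-b-Vh}. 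You instead identify $\nu_h=\norm{I-\Pi_h}_{L(V)}$ and $\mu_h=\norm{I-R_h}_{L(V)}$, apply the reverse triangle inequality for operator norms, and estimate $\norm{\Pi_h-R_h}_{L(V)}$ through the error equation $b(\Pi_hv-R_hv,\vphi_h)=\tfrac{1}{\sqrt{\alpha}}c(v-R_hv,\vphi_h)$ --- the classical comparison-of-Galerkin-projections argument. Both proofs rest on the same two structural facts (the $a$-orthogonality of $R_h$, which isolates the $c$-perturbation, and the discrete inf-sup \eqref{infsup-b-Vh} with test norm $\norm{\cdot}_\alpha$, which supplies the factor $\kappa_h\mu_h$), and your bookkeeping of $M$ and $L$ comes out exactly as in the paper's final step; your version is more elementary and avoids the sharpness claim ``with equality for some $\vphi_h$'' that the paper needs to recover $\mu_h$ exactly, at the mild price of invoking the Kato/Xu--Zikatanov identities on the product space (where you should note that, with the norm \eqref{adjoint-norm}, both components of $R_h$ have the same constant, so that $\norm{R_h}_{L(V)}=\mu_h$ as the paper also uses). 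One step you should make explicit: to bound $\norm{\Pi_hv-R_hv}$ you need the inf-sup of $b_{|V_h\times V_h}$ with the difference in the \emph{trial} slot and the test functions measured in $\norm{\cdot}_\alpha$, whereas \eqref{infsup-b-Vh} as written controls $\norm{\vphi_h}_\alpha$; the transposed version follows from the finite-dimensional inf-sup duality \eqref{inf-sup-duality} (exactly how the paper deduces \eqref{infsupVC} in Section~\ref{sec:4}), so this is a one-line addition rather than a gap.
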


\begin{proof}
As in the proof of Theorem \ref{T:qba}, we will make use of \eqref{qba-const} with $a$ replaced by $b$.  
Given $v\in V$ and $\vphi_h \in V_h$, we can write
\begin{equation*}
 b(v,\vphi_h)
 =
 b(R_hv,\vphi_h)
 +
 \frac{1}{\sqrt{\alpha}} c(v-R_hv,\vphi_h)
\end{equation*}
because of $a(v-R_hv,\vphi_h)=0$. Hence,
\begin{equation*}
 \snorm{ \sup_{\norm{v}=1} b(v,\vphi_h) - \sup_{\norm{v}=1} b(R_hv,\vphi_h) }
 \leq
 \frac{1}{\sqrt{\alpha}} \sup_{\norm{v}=1} |c(v-R_hv,\vphi_h)|.
\end{equation*}
As
\begin{equation*}
 \frac{\sup_{\norm{v}=1} b(R_hv,\vphi_h)}{\sup_{v_h\in V_h, \norm{v_h}=1} b(v_h,\vphi_h) }
 \leq
 \norm{R_h}_{L(V)} = \mu_h
\end{equation*}
with equality for some $\vphi_h\in V_h$, we obtain
\begin{equation*}
 |\nu_h - \mu_h|
 \leq
 \sup_{\vphi \in V_h} \frac{\frac{1}{\sqrt{\alpha}}\sup_{\norm{v}=1} |c(v-R_hv,\vphi_h)|}
  {\sup_{v_h\in V_h, \norm{v_h}=1} b(v_h,\vphi_h)}.
\end{equation*}
Thanks to \eqref{cont-inf-sup-c}, \eqref{norm-alpha}, and \eqref{infsup-b-Vh} this proves the claimed inequality.
\end{proof}

In order to deploy Lemma \ref{L:qba-consts}, we need additional assumptions for our optimization problem and its discretization. We shall consider two settings: a ``qualitative'' and a ``quantitative'' one. The former assumes in addition
\begin{subequations}
\label{qualitative}
\begin{equation}
\label{compactness}
 \text{$I:V_1\to W$ and $C:Q\to V_2^*$ are compact}
\end{equation}
for the optimization problem and
\begin{align}
\label{approximability}
 \forall v \in V \quad \lim_{h\to 0} \inf_{v_h \in V_h} \norm{v-v_h} = 0,
\end{align}
\end{subequations}
for the constraint discretization. Notice that, owing to \eqref{new-vs-old}, the condition \eqref{compactness} is independent of our choice to equip $V_2$ with the norm \eqref{adjoint-norm}.
\begin{lem}[Qualitative asymptotic quasi-best approximation]
\label{L:aqba-qualitative}
Under the assumptions \eqref{unif-qba} and \eqref{qualitative}, the quasi-best-approximation constant $\nu_h$ satisfies
\begin{equation*}
 \nu_h = \mu_h \big( 1 + \bar{\kappa} \, o(1) \big)
\quad\text{as}\quad
 h\to 0,
\end{equation*}
where
\begin{equation*}
 \bar{\kappa}
  =
 \frac{1+2L}{1+L} \left(
 1 + M \left( 1 + \frac{2M}{\sqrt{\alpha}} \right) \bar{\mu}
 \right)
 \quad\text{with}\quad
 L
 =
 \frac{M}{\sqrt{\alpha}}.
\end{equation*}
\end{lem}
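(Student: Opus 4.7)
The plan is to start from Lemma~\ref{L:qba-consts}, which gives
\begin{equation*}
|\nu_h - \mu_h| \leq \kappa_h\,\mu_h \sup_{\norm{v}=1} \snorm{v - R_h v}.
\end{equation*}
Because the formula for $\kappa_h$ in Theorem~\ref{T:qba} is monotone increasing in $\mu_h$, the hypothesis $\mu_h \leq \bar\mu$ from~\eqref{unif-qba} gives $\kappa_h \leq \bar\kappa$, and the lemma will follow as soon as I show that $\eta_h \defas \sup_{\norm{v}=1} \snorm{v-R_hv} \to 0$ as $h\to 0$.

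To attack this, I will introduce the operator $T\colon V\to W\times Q$ defined by $T(v_1,v_2)\defas(Iv_1, C^*v_2)$, so that $\snorm{\cdot} = \norm{T\cdot}_{W\times Q}$. By~\eqref{compactness} and the Hilbert-space equivalence of compactness of $C$ and of $C^*$, the operator $T$ is compact. The claim $\eta_h\to 0$ then says that the family $v\mapsto T(v-R_h v)$ converges to zero in $L(V,W\times Q)$. The main obstacle is that $(v-R_hv)$ appears \emph{on the right} of the compact operator $T$: the standard fact that a uniformly bounded, strongly convergent sequence composed on the \emph{left} of a compact operator converges in operator norm does not apply directly, and a naive dualization would require strong convergence of $(\mathrm{id}_V - R_h)^*$, which is not immediate. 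I will instead sidestep the difficulty with a Schatz-style weak-compactness subsequence argument.

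Arguing by contradiction, suppose there exist $\epsilon>0$, a sequence $h_n\to 0$ and $v_n\in V$ with $\norm{v_n}=1$ and $\norm{T(v_n - R_{h_n}v_n)}_{W\times Q} \geq \epsilon$. Setting $y_n \defas v_n - R_{h_n}v_n$ and using the Xu--Zikatanov identity $\norm{R_{h_n,i}}_{L(V_i)} = \mu_{h_n} \leq \bar\mu$, one has $\norm{y_n}\leq 1+\bar\mu$. By weak compactness in the Hilbert space $V$, I may pass to a subsequence with $y_n \weak y$; compactness of $T$ then forces $Ty_n \to Ty$ strongly in $W\times Q$, so $\norm{Ty}_{W\times Q}\geq \epsilon$, and in particular $y\neq 0$.

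The final step is to show $y=0$, which contradicts the above. By the defining relations of $R_{h,1}$ and $R_{h,2}$, Galerkin orthogonality holds in the form $a_{|V\times V}(y_n,\vphi_h) = 0$ for every $\vphi_h \in V_{h_n}$. Given $\vphi\in V$, approximability~\eqref{approximability} furnishes $\vphi_{h_n} \in V_{h_n}$ with $\norm{\vphi-\vphi_{h_n}}\to 0$, and the continuity of $a_{|V\times V}$ yields
\begin{equation*}
 |a_{|V\times V}(y_n,\vphi)|
 = |a_{|V\times V}(y_n,\vphi-\vphi_{h_n})|
 \leq M_a(1+\bar\mu)\,\norm{\vphi-\vphi_{h_n}} \longrightarrow 0.
\end{equation*}
On the other hand, $y_n \weak y$ implies $a_{|V\times V}(y_n,\vphi)\to a_{|V\times V}(y,\vphi)$, so $a_{|V\times V}(y,\vphi)=0$ for every $\vphi\in V$. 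The inf-sup property in~\eqref{cont-inf-sup-vec-a} then forces $y=0$, closing the contradiction.
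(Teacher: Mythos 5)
Your proposal is correct and follows essentially the same route as the paper: reduce via Lemma~\ref{L:qba-consts} and $\kappa_h\le\bar\kappa$ to the uniform convergence $\sup_{\norm{v}=1}\snorm{v-R_hv}\to0$, then use boundedness of $v-R_hv$ from \eqref{unif-qba}, weak compactness in $V$, Galerkin orthogonality together with \eqref{approximability} and \eqref{cont-inf-sup-vec-a} to identify the weak limit as zero, and finally compactness of $I$ and $C^*$ to upgrade to strong convergence in $\snorm{\cdot}$. The only cosmetic difference is that you argue by contradiction with a subsequence, whereas the paper extracts a near-maximizing sequence and shows $\snorm{d_k}\to0$ directly.
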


\begin{proof}
In the light of Lemma \ref{L:qba-consts} and \eqref{unif-qba}, it suffices to verify the uniform convergence
\begin{equation}
\label{unif-conv}
 \lim_{h\to 0} \sup_{\norm{v}=1} \snorm{v-R_hv} = 0.
\end{equation}
This follows from a standard argument; we provide details for the sake of completeness. Let $(h_k)_k$ be any sequence with $\lim_{k\to 0} h_k = 0$ and choose $v_k$ such that
\begin{equation*}
 \forall k\in\N
\quad
 \norm{v_k} = 1
 \text{ and } 
 \sup_{\norm{v}=1} \snorm{v-R_kv}
 \leq
 \snorm{v_k - R_k v_k} + \frac{1}{k},
\end{equation*}
where we write $k$ instead $h_k$ whenever the latter is an index. Exploiting \eqref{unif-qba} another time, we see that the sequence given by $d_k \defas v_k - R_k v_k$ is bounded in the Hilbert space $V$. Owing to \eqref{approximability}, its weak limit $d\in V$ satisfies
\begin{equation*}
 a(d,\vphi) = a(d-d_k,\vphi) + a(d_k, \vphi-\vphi_k)
\end{equation*}
for any $\vphi \in V$ and $\vphi_k \in V_k$. Choosing $\vphi_k$ by means of \eqref{approximability}, we derive $a(d,\vphi)=0$ by $k\to\infty$. Consequently, \eqref{cont-inf-sup-vec-a} yields $d=0$. Thanks to \eqref{compactness}, the operator $I:V_1\to W$ and the adjoint $C^*:V_2\to Q$ are compact. This turns the weak convergence $d_k \to 0$ in $V$ into the strong convergence $\snorm{d_k}\to0$ and the proof is finished.
\end{proof}

In order to quantify the convergence in Lemma \ref{L:aqba-qualitative}, we shall use a duality argument. This requires a second, more specific setting of additional assumptions involving the Sobolev spaces $H^s$, $s\geq0$, and their norms $\snorm{\cdot}_s$ over some domain. We use $\snorm{\cdot}_{s}$ instead of $\norm{\cdot}_{s}$ in order to avoid confusion with the norms $\norm{\cdot}_1$ and $\norm{\cdot}_2$ of $V_1$ and $V_2$. For $s<0$, we denote by $H^s$ the (topological) dual space of $H^{-s}$ and  $\snorm{\cdot}_s$ stands for the dual norm of $\snorm{\cdot}_{-s}$.

We suppose that spaces $V_1$ and $V_2$ relate to Sobolev spaces in the following way: There are $s_i\in\R$, $i=1,2$, and a constant $C_S\geq1$ such that
\begin{subequations}
\label{quantitative}
\begin{equation}
\label{Sobolev}
 \text{$V_i$ is a closed subspace of $H^{s_i}$ and $C_S^{-1}\snorm{\cdot}_{s_i} \leq \norm{\cdot}_i \leq C_S \snorm{\cdot}_{s_i}$ for $i=1,2$.}
\end{equation}
Furthermore, we suppose that there is $\delta>0$ such that the following three conditions hold. First, the operators $C$ and $I$ have the boundedness properties
\begin{equation}
\label{quant-compactness}
 C \in L(Q,H^{-s_2+\delta})
\quad\text{and}\quad
 I \in L(H^{s_1-\delta},W).
\end{equation}
Thus, the canonical embeddings $H^{-s_2+\delta} \to H^{-s_2}$ and $H^{s_1}\to H^{s_1-\delta}$ quantify the compactness assumption \eqref{compactness}. Second, the differential operator of the constraint and its adjoint offer the following regularity estimates: there is a constant $C_R>0$ such that, for all admissible $f$ and $g$,
\begin{equation}
\label{shift}
 \snorm{A^{-1}f}_{s_1+\delta}
 \leq
 C_R \snorm{f}_{-s_2+\delta}
\quad\text{and}\quad
 \snorm{A^{-*}g}_{s_2+\delta}
 \leq
 C_R \snorm{g}_{-s_1+\delta}.
\end{equation}
Third and last, the approximation spaces $V_h$ verify
\begin{equation}
\label{quant-approx}
 \inf_{v_h \in V_h} \norm{v-v_h}
 \leq
 C_{\mathcal{I}} h^\delta \left(
  \snorm{v_1}_{s_1+\delta}^2 + \snorm{v_2}_{s_2+\delta}^2
 \right)^{1/2}
\end{equation}
\end{subequations}
for some constant $C_{\mathcal{I}}>0$, which quantifies the approximation property \eqref{approximability}.

\begin{thm}[Quantitative asymptotic best approximation]
\label{T:aqba-quant}
Under the assumptions \eqref{unif-qba} and \eqref{quantitative}, the quasi-best-approximation constant $\nu_h$ satisfies
\begin{equation*}
 \nu_h = \mu_h \big( 1 + \bar{\kappa} O(h^\delta) \big)
\quad\text{as}\quad
 h \to 0,
\end{equation*}
where $\bar{\kappa}$ is as in Lemma \ref{L:aqba-qualitative}. For the $\alpha$-dependence of $\bar{\kappa}$, cf.\ Remark \ref{R:vanish-regu-qba}.
\end{thm}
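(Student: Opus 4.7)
The plan is to quantify the qualitative argument of Lemma~\ref{L:aqba-qualitative}. By Lemma~\ref{L:qba-consts} combined with the uniform stability \eqref{unif-qba}, it suffices to establish the uniform estimate
\begin{equation*}
 \sup_{\norm{v}=1} \snorm{v - R_h v}
 \leq
 C\, h^\delta,
\end{equation*}
where $C$ depends on $\bar\mu$, $C_S$, $C_R$, $C_\mathcal{I}$, and the operator norms in \eqref{quant-compactness}, but not on $h$. In light of the definition \eqref{enorm'}, I would treat the two contributions $\norm{I(v_1 - R_{h,1}v_1)}_W$ and $\norm{C^*(v_2 - R_{h,2}v_2)}_Q$ by two symmetric Aubin--Nitsche-style duality arguments.

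For the first contribution, set $e_1 \defas v_1 - R_{h,1}v_1$ and define the auxiliary dual state $\psi \in V_2$ by $a(\vphi_1, \psi) = \sprod{I\vphi_1}{Ie_1}_W$ for all $\vphi_1 \in V_1$, i.e.\ $A^*\psi = I^*(Ie_1)$. Testing with $\vphi_1 = e_1$ and exploiting Galerkin orthogonality $a(e_1,\psi_h) = 0$ for every $\psi_h \in V_{h,2}$, one has
\begin{equation*}
 \norm{Ie_1}_W^2
 =
 a(e_1, \psi - \psi_h)
 \leq
 M_a \norm{e_1}_1 \inf_{\psi_h\in V_{h,2}} \norm{\psi - \psi_h}_2.
\end{equation*}
The infimum is controlled by $C_\mathcal{I} h^\delta \snorm{\psi}_{s_2+\delta}$ via \eqref{quant-approx} applied to $(0,\psi)$. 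The regularity shift \eqref{shift} yields $\snorm{\psi}_{s_2+\delta} \leq C_R \snorm{I^*(Ie_1)}_{-s_1+\delta}$, which in turn is bounded by the norm of $I^*$ from $W$ into $H^{-s_1+\delta}$ (the dual of \eqref{quant-compactness}) times $\norm{Ie_1}_W$. Dividing by $\norm{Ie_1}_W$ and using $\norm{e_1}_1 \leq (1+\mu_h)\norm{v_1}_1$ gives $\norm{Ie_1}_W \lesssim (1+\bar\mu) h^\delta \norm{v_1}_1$. The second contribution is handled in a perfectly symmetric way: with $e_2 \defas v_2 - R_{h,2}v_2$, define $\chi\in V_1$ by $A\chi = C(C^*e_2)$, use the corresponding Galerkin orthogonality $a(\chi_h,e_2)=0$ from the definition of $R_{h,2}$, and apply \eqref{quant-approx}, \eqref{shift}, and \eqref{quant-compactness} for $C$ to obtain $\norm{C^*e_2}_Q \lesssim (1+\bar\mu) h^\delta \norm{v_2}_2$.

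Adding the squares of the two bounds gives $\snorm{v - R_hv}^2 \lesssim (1+\bar\mu)^2 h^{2\delta} \norm{v}^2$. Inserting this into Lemma~\ref{L:qba-consts}, together with $\mu_h\leq\bar\mu$ and $\kappa_h\leq\bar\kappa$ (the latter by monotonicity of the expression in Theorem~\ref{T:qba} with respect to $\mu_h$), yields $|\nu_h - \mu_h| \leq \mu_h\bar\kappa O(h^\delta)$, which is the claim.

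The main obstacle I anticipate is of a bookkeeping nature: ensuring that the $V_2$-norm used throughout is indeed the one defined in \eqref{adjoint-norm} and that $\sprod{\cdot}{\cdot}_2$ is compatible with the Sobolev norm in \eqref{Sobolev} up to the constant $C_S$. Once this is in place, the duality arguments above are routine and the two components treat $I$ and $C$ perfectly symmetrically via the two halves of \eqref{quant-compactness} and \eqref{shift}. A minor additional care is needed because the dual problems feed Galerkin orthogonality into different factors of $V_h$: the bound for $Ie_1$ uses approximation in $V_{h,2}$, while the bound for $C^*e_2$ uses approximation in $V_{h,1}$; this is what motivates the symmetric formulation \eqref{quant-approx} on the product space.
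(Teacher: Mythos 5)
Your proposal is correct and follows essentially the same route as the paper: both reduce the claim via Lemma~\ref{L:qba-consts} and \eqref{unif-qba} to the uniform bound $\sup_{\norm{v}=1}\snorm{v-R_hv}=O(h^\delta)$, which is then established by a duality argument combining the regularity shift \eqref{shift}, the quantified compactness \eqref{quant-compactness}, and the approximation property \eqref{quant-approx}. The only cosmetic difference is that the paper solves a single coupled dual problem $A\vphi_1=CC^*d_2$, $A^*\vphi_2=I^*Id_1$ with $d=v-R_hv$, so that $a(d,\vphi)=\snorm{d}^2$ and the vectorized Galerkin orthogonality finishes in one stroke, whereas you run two decoupled Aubin--Nitsche arguments (one for $\norm{I e_1}_W$ using $V_{h,2}$, one for $\norm{C^*e_2}_Q$ using $V_{h,1}$) and add the squares; this yields the same estimate with essentially the same constants.
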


\begin{proof}
Similarly as in the first step of the proof of Lemma~\ref{L:aqba-qualitative}, inserting \eqref{unif-qba} and
\begin{equation}
\label{quant-conv}
 \lim_{h\to 0} \sup_{\norm{v}=1} \snorm{v-R_hv} = O(h^\delta).
\end{equation}
into  Lemma~\ref{L:qba-consts} establishes the claim. To show \eqref{quant-conv}, let $v \in V$ with $\norm{v}=1$  and define $\vphi \in V$ as the solution of the following ``dual'' problem associated with the bilinear form $a_{|V\times V}$:
\begin{equation*}
 A\vphi_1 = C C^* d_2,
\quad
 A^* \vphi_2 = I^* I d_1,
\end{equation*}
where $d =(d_1,d_2) \defas v-R_hv$. We thus have
\begin{equation}
\label{duality-arg;1}
\begin{aligned}
 \snorm{v-R_hv}^2
 &=
 |d|^2
 =
 \langle I^* I d_1,d_1 \rangle_1 + \langle C C^* d_2, d_2 \rangle_2
 =
 a(d,\vphi)
 =
 a(v-R_hv,\vphi)
\\
 &=
 a(v-R_hv, \vphi-\vphi_h)
 \leq
 \norm{v-R_hv} \norm{\vphi-\vphi_h}, 
\end{aligned}
\end{equation}
where $\vphi_h \in V_h$ is arbitrary. For the first factor, \eqref{qba-Rh} and \eqref{unif-qba} imply
\begin{equation}
\label{duality-arg;2}
 \norm{v-R_hv} \leq \mu_h \leq \mu.
\end{equation}
For second factor, we employ \eqref{quant-approx} with suitable $\vphi_h\in V_h$ to obtain
\begin{align*}
 \norm{\vphi-\vphi_h}
 \leq
 C_{\mathcal{I}} h^\delta \left(
 \snorm{\vphi_1}_{s_1+\delta}^2 + \snorm{\vphi_2}_{s_2+\delta}^2
 \right)^{1/2}
\end{align*}
and it remains to show that the norms on the right-hand side are suitably bounded. Let consider the first one. Making use of the regularity estimate \eqref{shift} and the definition of $\vphi_1$, we deduce
\begin{align*}
 \snorm{\vphi_1}_{s_1+\delta}
 &\leq
 C_R \snorm{A\vphi_1}_{-s_2+\delta}
 =
 C_R \snorm{CC^*d_2}_{-s_2+\delta}
 \leq
 C_R \bar{M}_C \norm{C^*d_2}_Q
\\
 &\leq
 C_R \bar{M}_C\snorm{d}
 =
 C_R \bar{M}_C \snorm{v-R_hv},
\end{align*}
where $\bar{M}_C$ is the operator norm of $C$ from \eqref{quant-compactness}. A similar argument yields
\begin{equation*}
 \snorm{\vphi_2}_{s_2+\delta}
 \leq
 C_R \bar{M}_I \snorm{v-R_hv},
\end{equation*}
where $\bar{M}_I$ is the operator norm of $I$ in \eqref{quant-compactness}. We insert the previous estimates in the first one and conclude
\begin{equation*}
 \snorm{v-R_hv} \leq \mu C_{\mathcal{I}} C_R \bar{M} h^\delta
\end{equation*}
with $\bar{M} \defas \max\{\bar{M}_I, \bar{M}_C \}$, i.e., \eqref{quant-conv}.
\end{proof}
 
Let us exemplify Theorem \ref{T:aqba-quant} by two applications. The first one considers the optimization problem \eqref{sim-min} of the introduction, while the second one is more involved in that the constraint does not allow for a coercive set-up.

\begin{ex}[Simple model optimization]
\label{ex:sim-opt}
Discretize the optimization problem \eqref{sim-min} of the introduction with linear finite elements on quasi-uniform meshes with meshsize $h$. We have $V_1 = H^1_0 = V_2$ and, if we choose $\norm{\cdot}_1 = \snorm{\nabla\cdot}_0$, we already have $m_a=1=M_a$ and \eqref{adjoint-norm} does not change the norm in $V_2$. Further, $M_I=C_F=M_C$, where $C_F$ is the constant in the Poincar\'e-Friedrichs inequality.  Moreover, we have $s_1=1=s_2$ and, assuming that the underlying domain is convex, $\delta=1$. Taking Sobolev seminorm instead of norms in \eqref{Sobolev}, we then have $C_S=1$ for the relevant cases and $C_R=1$ thanks to elliptic regularity as well as $\bar{M}_I=1=\bar{M}_C$. Standard approximation theory shows \eqref{quant-approx} with $C_\mathcal{I}$ depending on the shape regularity of the underlying meshes. Since $\mu_h=1$, we conclude
\begin{equation*}
 |\nu_h - 1|
 \leq 
 2 \left( 1 + C_F \left( 1 + \frac{2 C_F}{\sqrt{\alpha}} \right)\right) h
\quad\text{as}\quad
 h\to0
\end{equation*}
for the quasi-best-approximation constant of the variational discretization in this case.
\end{ex}
\begin{ex}[Point source control]
	\label{ex:deltas}
	We consider the following modification of the optimization problem \eqref{sim-min}, where the distributed control is replaced by a finite number of point sources:
	\begin{equation}
	\label{min-deltas}
	\min_{(q,u)\in \R^\ell \times H^{1-\sigma}_0}
	\frac12 \snorm{ u-\ud }_0^2 + \frac\alpha2 \sum_{j=1}^{\ell} q_j^2
	\quad\text{subject to}\quad
	-\Delta u= \sum_{j=1}^{\ell} q_j \delta_{x_j},
	\end{equation}
	where the underlying domain $\Omega\subset\R^2$ is planar, polygonal, Lipschitz, but not necessarily convex, $\{ x_j \}_{j=1}^{\ell}\subset \Omega$ are $\ell$ distinct points, $\delta_{x_j}$ denotes the Dirac functional at the point $x_j$, and $0<\sigma<\frac{1}{2}$. The bilinear form $a(v,w)=\int_\Omega \nabla v \cdot \nabla w \, dx$, $v,w \in C^\infty_0(\Omega)$, has a continuous and inf-sup-stable extension on $V_1\times V_2$ with $V_1 = H^{1-\sigma}_0(\Omega)$ and $V_2 = H^{1+\sigma}_0(\Omega)$ and allows for a standard discretization with linear finite elements $S_h$ for both trial and test space; see, e.g., \cite{GaMoVe:2017}. For the verification of the discrete inf-sup condition, denote by $R_h$ and $\Lambda_h$ the Ritz projection and the Scott-Zhang interpolation operator, respectively. As
	\begin{equation*}
	\snorm{R_h\vphi}_{1+\sigma}
	\leq
	\snorm{\Lambda_h\vphi}_{1+\sigma} + \snorm{R_h\vphi-\Lambda_h\vphi}_{1+\sigma}
	\lesssim
	\snorm{\vphi}_{1+\sigma}
	+ h^{-\sigma} \snorm{R_h\vphi-\Lambda_h\vphi}_{1} 
	\end{equation*}
	and
	\begin{align*}
	h^{-\sigma} \snorm{R_h\vphi-\Lambda_h\vphi}_{1}
	&\leq
	h^{-\sigma} \snorm{R_h\vphi-\vphi}_{1}
	+ h^{-\sigma} \snorm{\vphi-\Lambda_h\vphi}_{1}
	\\
	&\lesssim 
	h^{-\sigma} \snorm{\vphi-\Lambda_h\vphi}_{1}
	\lesssim
	\snorm{\vphi}_{1+\sigma},
	\end{align*}
	the continuous inf-sup-condition yields, for any $s_h\in S_h$,
	\begin{equation*}
	\snorm{s_h}_{1-\sigma}
	\lesssim
	\sup_{\snorm{\vphi}_{1+\sigma}=1} a(s_h,\vphi)
	=
	\sup_{\snorm{\vphi}_{1+\sigma}=1} a(s_h,R_h\vphi)
	\lesssim
	\sup_{\vphi_h\in S_h,\norm{\vphi}_{1+\sigma}=1} a(s_h,\vphi_h),
	\end{equation*}
	and so
	\begin{equation*}
	\snorm{s_h}_{1-\sigma}
	\leq
	\mu_h \sup_{\vphi_h\in S_h,\norm{\vphi_h}_2=1} a(s_h,\vphi_h),
	\end{equation*}
	where $\mu_h$ depends only on continuous inf-sup constant and
        on the shape regularity of the underlying mesh and we switched
        to \eqref{adjoint-norm} for the norm on $V_2$.
	To complete the setting, we set $W=L^2(\Omega)$, $Q=\R^\ell$, and let $I$ be the canonical embedding $H^{1-\sigma}(\Omega)\to L^2(\Omega)$ and $C:\R^\ell\to H^{-(1+\sigma)}(\Omega)$ be given by $Cq=\sum_{j=1}^{\ell} q_j \delta_{x_j}$. The continuity constants $M_I$ and $M_C$ are of order $1$ and $\ell$, respectively. Notice that, for $\sigma=0$, $C$ is not continuous because functions in $H^1_0(\Omega)$ do not have point values in general. Choosing $\delta \in (0,\sigma)$, we have \eqref{quantitative} with $s_1=1-\sigma$, $s_2=1+\sigma$ and therefore
	\begin{equation*}
	\nu_h
	=
	\mu_h \left( 1 + \frac{O(h^{\delta})}{\sqrt{\alpha}} \right)
	\quad\text{as}\quad
	h\to0.
	\end{equation*}
\end{ex}

\section{Analysis with approximate control-action operator}
\label{sec:4}
%
%
In this section, we shall analyze the approximation properties of a
variational discretization, where the control-action operator is
approximated. This includes the case of a discretized control space.

\subsection{Approximate variational discretization}
\label{sec:vc-setting}
Let $V_{h,i}\subset V_i$, $i=1,2$, be the same finite-dimensional conforming spaces introduced in Section~\ref{sec:vardisc} and assume that the linear operator $C_h^*:V\to Q$ approximates $C^*$. Then the (semi-)discrete optimization
\begin{equation}
\label{mod-min-Ch}
\begin{aligned}
 \min_{(\Tilde{q}_h,u_h) \in Q\times V_{h,1}}
&\;\frac{1}{2}\norm{Iu_h-\ud}_W^2 + \frac{\alpha}{2}\norm{\Tilde{q}_h}^2_Q
\\
&\text{subject to}
\qquad
\forall \vphi_{h,2} \in V_{h,2}
\quad
a(u_h, \vphi_{h,2})
=
\sprod{\Tilde{q}_h}{C_h^*\vphi_{h,2}}_Q,
\end{aligned}
\end{equation}
generalizes \eqref{mod-min-h}. It has the solution $(\Tilde{q}_h,\Tilde{u}_h) \in Q \times V_{h,1}$ if and only if there exists $\tilde z_h \in V_{h,2}$ such that
\begin{align}
\label{mod-opt-VC}
\begin{aligned}
 &\forall \vphi_{h,2} \in V_{h,2}
 &a(\tilde u_h,\vphi_{h,2})
 &=
 \sprod{\Tilde{q}_h}{C_h^*\vphi_{h,2}}_Q,
\\
 &\forall \vphi_{h,1} \in V_{h,1}
 &a(\vphi_{h,1},\tilde z_h)
 &=
 \tfrac{1}{\sqrt{\alpha}}  \sprod{I\tilde u_h-\ud}{I \vphi_{h,1}}_W, 
\\
&&\sqrt{\alpha} \Tilde{q}_h &= - C^*_h\tilde
z_h. 
\end{aligned}
\end{align}
As before, we may eliminate $\Tilde{q}_h$. If we define
\begin{align*}
 b_h(v,\vphi)
 \defas
 a(v,\vphi) + \frac1{\sqrt{\alpha}}c_h(v,\vphi)
\end{align*}
with
\begin{align*}
 c_h(v,\vphi)
 \defas
 \sprod{C_h^*v_2}{C_h^*\vphi_2}_Q - \sprod{Iv_1}{I\vphi_1}_W
\end{align*}
for $v,\vphi \in V = V_1 \times V_2$, then the reduced version of \eqref{mod-opt-VC} is the following perturbation of the optimality system \eqref{mod-ropt-h}:
\begin{align}
\label{mod-ropt-VC}
\begin{aligned}
\text{find } \tilde x_h&=(\tilde u_h,\tilde z_h) \in V_h \text { such that }
\\
&\forall \vphi_h = (\vphi_{h,1},\vphi_{h,2}) \in V_h
\quad
b_h(\tilde x_h,\vphi_h)
=
- \tfrac{1}{\sqrt{\alpha}}  \sprod{\ud}{I\vphi_{h,1}}_W,
\end{aligned}
\end{align}
where $V_h = V_{h,1}\times V_{h,2}$. Before we proceed to analyze its discretization error, let us give an important class of examples.

\begin{ex}[Discretized controls]
\label{ex:controldisc}
We consider a conforming discretization of the control variable. More precisely, replacing $Q$ in \eqref{mod-min-h} with a finite-dimensional subspace $Q_h \subset Q$ leads to the discrete optimality system
\begin{align}
\label{mod-opt-qh}
\begin{aligned}
	&\forall \vphi_{h,2}\in V_{h,2}
	& a(\tilde{u}_h, \vphi_{h,2})
	&= \sprod{\Tilde{q}_h}{C^*\vphi_{h,2}}_Q,
	\\
	&\forall \vphi_{h,1}\in V_{h,1}
	&a(\vphi_{h,1},\tilde{z}_h)
	&= \tfrac{1}{\sqrt\alpha}
	\sprod{I\tilde{u}_h-\ud}{I \vphi_{h,1}}_W, 
	\\
	&\forall p_h \in Q_h
	&(\sqrt\alpha \Tilde{q}_h,p_h)_Q &= -
	(C^*\Tilde{z}_h,p_h)_Q. 
\end{aligned}
\end{align}
If we denote by $P_h$ the $Q$-orthogonal projection onto $Q_h$, then
the third equations mean 
\[
 \tilde{q}_h = -\tfrac{1}{\sqrt\alpha} P_{h} C^* \Tilde{z}_h
\]
and, therefore, the right-hand side of the first equation can be rewritten as follows:	
\[
 \sprod{\tilde{q}_h}{C^*\vphi_{h,2}}_Q
 =
 -\tfrac{1}{\sqrt\alpha} \sprod{P_{h}C^*\Tilde{z}_h}{C^*\vphi_{h,2}}_Q
 =
 -\tfrac{1}{\sqrt\alpha} \sprod{P_{h}C^*\Tilde{z}_h}{P_{h}C^*\vphi_{h,2}}_Q.
\]
Hence, the reduced version of \eqref{mod-opt-qh} is a special case of \eqref{mod-ropt-VC} with
\begin{equation*}
 C_h^* = P_hC^*.
\end{equation*}
\end{ex}

As the bilinear form $b_h$ coincides with $b$ except for using $C_h^*$ in place of $C$, the non-asymptotic continuity and nondegeneracy properties of $b$ in Section~\ref{sec:2}-\ref{sec:3} immediately carry over by replacing $M_C$ with the operator norm $M_{C_h}$ of $C_h^*$.
In particular, setting $\Tilde{M}_h\defas \max\{M_I,M_{C_h}\}$ and defining 
\begin{equation}
\label{norm-alphaV}
 \norm{\vphi}_{\alpha,h}
 \defas
 M_a\norm{\vphi} + \frac{\Tilde{M}_h}{\sqrt{\alpha}} \snorm{\vphi},
\end{equation}
inequality \eqref{cont} yields
\begin{equation}
\label{contV}
%
 |b_h(v,\vphi)|
 \leq
 M_a \norm{v} \norm{\vphi}
  + \frac{\tilde M_h}{\sqrt{\alpha}} \norm{v} \snorm{\vphi}
 \leq
  \norm{v} \norm{\vphi}_{\alpha,h}
\end{equation}
for all $v,\vphi \in V$. Furthermore, \eqref{infsup-b-Vh} and the inf-sup duality \eqref{inf-sup-duality} for $b_h{}_{|V_h\times V_h}$ imply
\begin{align}\label{infsupVC}
 \sup_{\vphi_h\in V_h,\norm{\vphi_h}_{\alpha,h}=1} b_h(v_h,\vphi_h)
 \ge
 \frac{1}{\tilde\kappa_h\mu_h} \|v_h\|, 
\end{align}
for all $v_h\in V_h$, where
\begin{align}
\label{eq:constsVC}
 \Tilde{\kappa}_h
 =
 \frac{1+2\tilde L}{1+\tilde L} \left(
  1+ \tilde{M}_h \mu_h \left(
   1+\frac{2\tilde M_h}{\sqrt\alpha}
  \right)
 \right)
\quad\text{with}\quad
 \tilde{L}
 =
 \frac{\tilde{M}_h}{\sqrt{\alpha}} 
\end{align}
and $\mu_h$ is the quasi-best-approximation constant of the constraint discretization.

Since the structures of the discrete problems \eqref{mod-ropt-VC} and \eqref{mod-ropt-h} are the same, well-posedness of \eqref{mod-ropt-VC}
follows from Lemma~\ref{L:well-posedness-h}.

\subsection{Approximation}
\label{sec:vc-quasi-best}
As in the error analysis of Section~\ref{S:qb-approx}, we adopt the convenient choice
\begin{equation*}
 \text{\eqref{adjoint-norm} as norm in $V_2$.}
\end{equation*}
Here we start our analysis by splitting the error into an approximation part and a consistency part.

\begin{lem}[Approximation and consistency error]
\label{L:err=approx+consistency}
Let $x=(u,z)$ be any solution of the optimality system
\eqref{mod-vred} and let $\Tilde{x}_h$ 
be its approximation from \eqref{mod-ropt-VC}. Then the error satisfies
\begin{align*}
 \norm{x-\Tilde{x}_h}
 &\le
 \Tilde{\kappa}_h \mu_h \left(
 \inf_{v_h\in V_h} \|x-v_h\|
 + \frac{1}{\sqrt{\alpha}} \sup_{\vphi_h \in V_h}
  \frac{\big\langle (C_h C_h^* - C C^*) z, \vphi_{h,2} \big\rangle_2}{\norm{\vphi_h}_{\alpha,h}}
 \right)
\\
 &\le
  2 \Tilde{\kappa}_h \mu_h \norm{x-\Tilde{x}_h}.
\end{align*}
Here $\tilde{\kappa}_h$ is defined by~\eqref{eq:constsVC}
and $\mu_h$ is the quasi-best-approximation constant of the constraint
discretization from~\eqref{qba-Rh}.
\end{lem}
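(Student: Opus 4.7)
The plan is to prove this as a Strang-type lemma, with the novelty lying in the bookkeeping that respects the asymmetric trial/test norms $\norm{\cdot}$ and $\norm{\cdot}_{\alpha,h}$ rather than introducing an extra multiplicative factor from a naive triangle inequality.

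For the upper bound, I would first introduce the auxiliary element $x_h^\ast \in V_h$ defined as the Galerkin projection of $x$ with respect to $b_h$, i.e.\ $b_h(x_h^\ast,\vphi_h) = b_h(x,\vphi_h)$ for all $\vphi_h \in V_h$. Repeating the argument of Theorem~\ref{T:qba} for $b_h$ in place of $b$, using the continuity \eqref{contV} for the numerator of the Tantardini--Veeser ratio \eqref{qba-const} and the discrete inf-sup bound \eqref{infsupVC} for the denominator, yields the quasi-optimality estimate $\norm{x - x_h^\ast} \le \Tilde{\kappa}_h \mu_h \inf_{v_h \in V_h} \norm{x-v_h}$. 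Then I would apply the inf-sup bound \eqref{infsupVC} once more to $x_h^\ast - \Tilde{x}_h \in V_h$. Since both $x$ and $\Tilde{x}_h$ solve their respective systems with the same right-hand side, the consistency residual computes cleanly as
\begin{equation*}
 b_h(x_h^\ast - \Tilde{x}_h, \vphi_h)
 = b_h(x,\vphi_h) - b(x,\vphi_h)
 = \tfrac{1}{\sqrt{\alpha}}(c_h - c)(x,\vphi_h)
 = \tfrac{1}{\sqrt{\alpha}} \bigl\langle (C_h C_h^\ast - CC^\ast) z, \vphi_{h,2}\bigr\rangle_2,
\end{equation*}
because the $I$-contributions of $c$ and $c_h$ coincide and $a$ does not enter $b_h - b$. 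Adding the two contributions through the triangle inequality $\norm{x-\Tilde{x}_h} \le \norm{x - x_h^\ast} + \norm{x_h^\ast - \Tilde{x}_h}$ then produces exactly the claimed upper bound with the single factor $\Tilde{\kappa}_h \mu_h$ in front.

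For the lower bound, I would observe that $\inf_{v_h \in V_h}\norm{x - v_h} \le \norm{x - \Tilde{x}_h}$ trivially, and that the consistency quantity equals $\sup_{\vphi_h} b_h(x - \Tilde{x}_h,\vphi_h)/\norm{\vphi_h}_{\alpha,h}$, again by exploiting $b_h(\Tilde{x}_h,\vphi_h) = f(\vphi_h) = b(x,\vphi_h)$ for all $\vphi_h \in V_h$. The continuity bound \eqref{contV} then controls this supremum by $\norm{x - \Tilde{x}_h}$, so the sum of the two terms inside the parentheses is at most $2\norm{x - \Tilde{x}_h}$, delivering the stated factor~$2$.

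The only subtle point, rather than an obstacle, is the discipline of using the perturbed norm $\norm{\cdot}_{\alpha,h}$ defined in \eqref{norm-alphaV} consistently on both sides: the numerator of the discrete inf-sup ratio from Theorem~\ref{T:qba}'s proof carries over verbatim because \eqref{contV} is stated with respect to $\norm{\cdot}_{\alpha,h}$, and the denominator estimate \eqref{infsupVC} is precisely the analogue of \eqref{infsup-b-Vh} with $M_C$ replaced by $M_{C_h}$. This ensures that both the Galerkin quasi-optimality and the residual-based bound on $x_h^\ast - \Tilde{x}_h$ share the same constant $\Tilde{\kappa}_h\mu_h$, which is what lets the final estimate collapse into the compact form stated in the lemma.
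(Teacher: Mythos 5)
Your proposal is correct and follows essentially the same route as the paper's proof: the auxiliary Galerkin projection $x_h^*$ with respect to $b_h$, the quasi-optimality of $x-x_h^*$ via the argument of Theorem~\ref{T:qba} with \eqref{contV} and \eqref{infsupVC}, the identity $b_h(x_h^*-\Tilde{x}_h,\vphi_h)=b_h(x,\vphi_h)-b(x,\vphi_h)=\tfrac{1}{\sqrt{\alpha}}\langle (C_hC_h^*-CC^*)z,\vphi_{h,2}\rangle_2$, the triangle inequality for the upper bound, and the trivial best-approximation bound together with continuity for the factor $2$ in the lower bound. No gaps.
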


\begin{proof}
Define $x_h^* \in V_h$ by
\begin{equation*}
 \forall \vphi_h \in V_h
\quad
 b_h(x^*_h,\vphi_h) = b_h(x,\vphi_h).
\end{equation*}
Then Theorem~\ref{T:qba} with $b_h$, $x^*_h$, $\Tilde{\kappa}_h$ in place of $b$, $x_h$, $\kappa_h$ gives
\begin{equation*}
 \norm{x-x^*_h} \leq \Tilde{\kappa}_h \mu_h \inf_{v_h \in V_h} \norm{x-v_h}
\end{equation*}
and we have the identities
\begin{align*}
 b_h(x_h^*-\Tilde{x}_h,\vphi_h)
 &=
 b_h(x-\Tilde{x}_h,\vphi_h)
 =
 b_h(x,\vphi_h) - b(x,\vphi_h)
\\
 &=
 \frac{1}{\sqrt{\alpha}} \langle C_h C_h^*z - C C^* z, \vphi_{h,2} \rangle_2
\end{align*}
for all $\vphi_h \in V_h$.  In view of \eqref{contV} and \eqref{infsupVC}, these identities imply
\begin{equation*}
 \frac{1}{\Tilde{\kappa}_h\mu_h} \norm{x^*_h - \Tilde{x}_h}
 \leq
 \frac{1}{\sqrt{\alpha}}
  \sup_{\vphi_h \in V_h}
   \frac{\big\langle (C_h C_h^* - C C^*) z, \vphi_{h,2} \big\rangle_2}{\norm{\vphi_h}_{\alpha,h}}
 \leq
 \norm{x-\Tilde{x}_h}.
\end{equation*}
The claim follows from the obvious inequalities $\norm{x-\Tilde{x}_h} \leq \norm{x-x^*_h} + \norm{x^*_h-\Tilde{x}_h}$ and $\inf_{v_h \in V_h} \norm{x-v_h} \leq \norm{x-\Tilde{x}_h}$.
\end{proof}

For the next corollary it is necessary to consider a
sufficiently large class of optimization
problems, e.g., the class $\mathcal{P}$ of optimization
problems, where a constraint can be of the form $Au=Cq+f$ for some
$f\in V_2^*$ and $I^*$ may be surjective.  
\begin{cor}[Necessary condition for quasi-best approximation]
\label{C:qba-Ch}
If the approximate variational discretization \eqref{mod-ropt-VC} is quasi-best in the class $\mathcal{P}$, then
\begin{equation*}
 \forall v_{2,h} \in V_{2,h}
\quad
 \norm{C_h^*v_{2,h}}_Q = \norm{C^*v_{2,h}}_Q.
\end{equation*} 
\end{cor}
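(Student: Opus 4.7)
The plan is to exploit the quasi-best bound by constructing, for each prescribed $v_{h,2}\in V_{h,2}$, a problem in $\mathcal{P}$ whose exact reduced-optimality solution already lies in $V_h$; then $\inf_{v_h\in V_h}\|x-v_h\|=0$ forces $\Tilde{x}_h=x$, and the identity $b(x,\vphi_h)=b_h(\Tilde{x}_h,\vphi_h)$ on $V_h$ reduces to exactly the equality of the two $C$-contributions on $V_{h,2}$, from which the claimed norm identity follows by setting $\vphi_{h,2}=v_{h,2}$.

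\textbf{Construction of the auxiliary problem.} Fix $v_{h,2}\in V_{h,2}$ and set $u_0\defas 0$, $z_0\defas v_{h,2}$, $q_0\defas -\tfrac{1}{\sqrt{\alpha}}C^*z_0$. I would then pick the data of a member of $\mathcal{P}$ by $f\defas -Cq_0=\tfrac{1}{\sqrt{\alpha}}CC^*z_0\in V_2^*$ (admissible because $\mathcal{P}$ allows an arbitrary $f$ in the perturbed constraint $Au=Cq+f$) and $\ud\in W$ such that $I^*\ud=-\sqrt{\alpha}\,A^*z_0$ (possible because $\mathcal{P}$ contains instances with $I^*$ surjective). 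Substituting $(u_0,q_0,z_0)$ into the optimality system $Au=Cq+f$, $A^*z=\tfrac{1}{\sqrt{\alpha}}I^*(Iu-\ud)$, $\sqrt{\alpha}q=-C^*z$ verifies by inspection that it is indeed the (unique) solution. Hence the corresponding reduced solution $x=(u_0,z_0)=(0,v_{h,2})$ lies in $V_h$, and the quasi-best-approximation hypothesis forces $\Tilde{x}_h=x$.

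\textbf{Comparison step and the main obstacle.} Since $x$ and $\Tilde{x}_h=x$ solve the continuous and discrete reduced equations, whose right-hand sides coincide when tested against $\vphi_h\in V_h$, subtraction gives $b_h(x,\vphi_h)=b(x,\vphi_h)$ for every $\vphi_h\in V_h$. Because $b_h-b=\tfrac{1}{\sqrt{\alpha}}(c_h-c)$ and $c_h$, $c$ differ only in their $C$-parts (the $I$-parts are identical), this collapses to $\sprod{C_h^*v_{h,2}}{C_h^*\vphi_{h,2}}_Q=\sprod{C^*v_{h,2}}{C^*\vphi_{h,2}}_Q$ for all $\vphi_{h,2}\in V_{h,2}$; taking $\vphi_{h,2}=v_{h,2}$ yields the claim. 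The only delicate point — and the sole obstacle I foresee — is the construction: it relies on the precise content of ``sufficiently large class $\mathcal{P}$'', namely that one may simultaneously prescribe $f\in V_2^*$ freely and realize $I^*\ud=-\sqrt{\alpha}\,A^*z_0$ by some $\ud\in W$. Once these two features of $\mathcal{P}$ are fixed, the remainder of the argument is bookkeeping.
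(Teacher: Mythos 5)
Your proof is correct and follows essentially the same route as the paper: you realize a prescribed discrete pair as the exact solution of a problem in $\mathcal{P}$ (using the free $f\in V_2^*$ and the surjectivity of $I^*$), invoke quasi-bestness to force $\Tilde{x}_h=x$, and then extract the identity $\sprod{C_h^*v_{h,2}}{C_h^*\vphi_{h,2}}_Q=\sprod{C^*v_{h,2}}{C^*\vphi_{h,2}}_Q$ on $V_{h,2}$. The only cosmetic difference is that you make the data construction explicit and obtain the consistency identity by direct subtraction of the two variational equations, whereas the paper leaves the construction implicit and cites Lemma~\ref{L:err=approx+consistency} for the same cancellation.
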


\begin{proof}
Let $v_{2,h}\in V_{2,h}$ be arbitrary and take some $v_{1,h}\in
V_{1,h}$. Then $v_h=(v_{1,h},v_{2,h}) \in V_h \subset V$ is a possible
solution in the class $\mathcal{P}$. Since \eqref{mod-ropt-VC} is
quasi-best in $\mathcal{P}$, the discrete solution is exactly $v_h \in
V_h$. Hence, by Lemma \ref{L:err=approx+consistency} we have $(C_h
C_h^* - C C^*) v_{2,h} = 0$, which yields $\norm{C_h^*v_{h,2}}_Q = \norm{C^*v_{h,2}}_Q$.
\end{proof}

Although possible, it is difficult to imagine that a practical
approximation $C_h^*$ satisfies the condition in Corollary
\ref{C:qba-Ch} without coinciding with $C$. We therefore consider in
what follows only assumptions on $C_h^*$ that lead to asymptotic
quasi-best approximation. In view of
Lemma~\ref{L:err=approx+consistency}, this requires, that the
consistency error vanishes at least as fast as the best approximation
error, i.e.,
\begin{align}
\label{eq:C(h)}
 \sup_{\vphi_h \in V_h}
  \frac{\big\langle (C_h C_h^* - C C^*) z, \vphi_{h,2}\big\rangle_2}
   {\norm{\vphi_h}_{\alpha,h}}
 =
 o\left(\inf_{v_h\in V_h}\norm{x-v_h}\right).
\end{align}
Moreover, to capture in the limit the compactness of $C^*$ resulting from  assumption~\eqref{compactness}, we assume that  
\begin{align}
\label{compactness-VC}
 d_h\weak 0~\text{weakly in $V_2$ as}~h\to 0
\quad\implies\quad
  C_h^*d_h\to 0 ~\text{strongly as}~h\to 0.
\end{align}
This implies that the operator norms
$\|C_h^*\|_{L(V_2,Q)} = \Tilde{M}_h = \max\{M_I,M_{C_h}\}$
are uniformly bounded. Indeed, suppose that $\Tilde{M}_h \to \infty$ as
$h\to 0$ and, for each $h>0$, let  
$\vphi_2^h\in V_2$ be such that $\|C_h^*\vphi_2^h\|_Q = \Tilde{M}_h$ and
$\|\vphi_2^h\|_2 = 1 $. Then $\vphi_2^h / \Tilde{M}_h\to 0$ in $V_2$ as $h\to
0$, which, in view of~\eqref{compactness-VC}, yields a
contradiction. Consequently,
\begin{equation*}
 \Tilde{M}
 \defas
 \sup_{h}\Tilde{M}_h
 =
 \sup_{h}\max\{M_I,M_{C_h}\}
\end{equation*}
is finite.

\begin{lem}[Qualitative asymptotic quasi-best approximation with approximate control-action]
\label{L:VC-qual-as-qba}
Let $x = (u,z) \in V$ be a solution to problem~\eqref{mod-vred} and let $\tilde{x}_h = (\tilde{u}_h,\tilde{z}_h) \in V_h$, $h>0$, be the corresponding approximations given by \eqref{mod-ropt-VC}. Furthermore, assume uniform stability \eqref{unif-qba}, approximability \eqref{approximability}, limiting compactness~\eqref{compactness-VC}, and that $I:V_1 \to W$ is compact.
If the exact solution $x$ satisfies \eqref{eq:C(h)}, we have 
\begin{align*}
 \norm{x-\Tilde{x}_h}
 \le
 \mu_h \left(1 + \frac{\tilde{\kappa}}{\sqrt{\alpha}} \, o(1)
 \right)
 \inf_{v_h\in V_h}\|x-v_h\|
\quad\text{as}~h\to 0,
\end{align*}
  where
  \begin{align*}
    \Tilde{\kappa}
 =
 \frac{1+2\tilde L}{1+\tilde L} \left(
  1+ \tilde{M} \bar\mu\left(
   1+\frac{2\tilde M}{\sqrt\alpha}
  \right)
 \right)<\infty
\quad\text{with}\quad
 \tilde{L}
 =
 \frac{\tilde{M}}{\sqrt{\alpha}}.
  \end{align*}
\end{lem}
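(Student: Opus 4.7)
The plan is to sharpen the Galerkin splitting underlying Lemma~\ref{L:err=approx+consistency}. Define $x_h^*\in V_h$ as the $b_h$-Galerkin projection of $x$ by $b_h(x_h^*,\vphi_h)=b_h(x,\vphi_h)$ for all $\vphi_h\in V_h$, and split $\norm{x-\tilde x_h}\leq\norm{x-x_h^*}+\norm{x_h^*-\tilde x_h}$. The consistency piece is controlled exactly as in the proof of Lemma~\ref{L:err=approx+consistency}: from $b_h(x_h^*-\tilde x_h,\vphi_h) = \tfrac{1}{\sqrt\alpha}\langle(C_hC_h^*-CC^*)z,\vphi_{h,2}\rangle_2$, the discrete inf-sup~\eqref{infsupVC} combined with~\eqref{eq:C(h)} and the uniform bound $\tilde\kappa_h\leq\tilde\kappa$ (which follows from the finiteness of $\tilde M=\sup_h\tilde M_h$ under~\eqref{compactness-VC}) yields $\norm{x_h^*-\tilde x_h}=\tfrac{\tilde\kappa\mu_h}{\sqrt\alpha}\,o(\inf_{v_h}\norm{x-v_h})$ as $h\to 0$.

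For the approximation piece $\norm{x-x_h^*}$, the objective is to improve on Theorem~\ref{T:qba} (which would only furnish $\tilde\kappa_h\mu_h\inf\norm{x-v_h}$) to the asymptotically sharper estimate $\norm{x-x_h^*}\leq\mu_h(1+o(1))\inf_{v_h}\norm{x-v_h}$. This is done by transporting Lemma~\ref{L:aqba-qualitative} from $b$ to $b_h$. Since the Ritz projection $R_h$ is defined through $a$ and still satisfies $a(v-R_h v,\vphi_h)=0$, the identity $b_h(v,\vphi_h)-b_h(R_h v,\vphi_h)=\tfrac{1}{\sqrt\alpha}c_h(v-R_h v,\vphi_h)$ holds and, together with the continuity of $c_h$ in the seminorm $\snorm{w}_h\defas(\norm{Iw_1}_W^2+\norm{C_h^*w_2}_Q^2)^{1/2}$, produces, exactly as in Lemma~\ref{L:qba-consts},
\begin{equation*}
|\nu_h-\mu_h|\le\tilde\kappa_h\mu_h\sup_{\norm{v}=1}\snorm{v-R_h v}_h,
\end{equation*}
where $\nu_h$ now denotes the quasi-best-approximation constant of the $b_h$-discretization.

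The main step is to establish $\sup_{\norm{v}=1}\snorm{v-R_h v}_h\to 0$ as $h\to 0$. Mimicking the compactness argument of Lemma~\ref{L:aqba-qualitative}, for any sequence $h_k\to 0$ select unit-norm $v_k\in V$ realising the supremum up to $1/k$ and set $d_k\defas v_k-R_{h_k}v_k$. Uniform stability~\eqref{unif-qba} bounds $\norm{d_k}\leq 1+\bar\mu$, so a subsequence converges weakly in $V$ to some $d$; approximability~\eqref{approximability} together with the Galerkin orthogonality $a(d_k,\vphi_{h_k})=0$ identifies $a(d,\cdot)\equiv 0$, whence $d=0$ by~\eqref{cont-inf-sup-vec-a}. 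Compactness of $I\colon V_1\to W$ then gives $\norm{Id_{k,1}}_W\to 0$, while the limiting compactness~\eqref{compactness-VC} converts $d_{k,2}\weak 0$ in $V_2$ into $\norm{C_{h_k}^*d_{k,2}}_Q\to 0$; jointly, $\snorm{d_k}_{h_k}\to 0$. Inserting this into the sharpened approximation bound and adding the consistency estimate from the first paragraph yields the claim.

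The main obstacle is the $h$-dependence of the seminorm $\snorm{\cdot}_h$: one needs convergence to zero in a norm that itself varies with $h$. This is precisely where assumption~\eqref{compactness-VC} is used, routing weak convergence in $V_2$ through the entire family $(C_h^*)_h$ directly into norm convergence in $Q$, rather than relying on a single fixed compact operator $C^*$ as in the unperturbed case.
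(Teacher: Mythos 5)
Your proposal is correct and follows essentially the same route as the paper: split off the auxiliary $b_h$-Galerkin projection $x_h^*$ of $x$, bound $\norm{x-x_h^*}$ by rerunning Lemma~\ref{L:qba-consts} and the compactness argument of Lemma~\ref{L:aqba-qualitative} with $b_h$, the $h$-dependent seminorm, and the limiting compactness \eqref{compactness-VC} in place of compactness of $C^*$, bound $\norm{x_h^*-\tilde x_h}$ via \eqref{eq:C(h)} as in Lemma~\ref{L:err=approx+consistency}, and conclude by the triangle inequality. You merely make explicit the details (the seminorm $\snorm{\cdot}_h$ and the diagonal compactness argument for the family $(C_h^*)_h$) that the paper's proof leaves to the reader.
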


\begin{proof}
As in the proof of Lemma~\ref{L:err=approx+consistency}, define $x_h^* \in V_h$ by
\begin{equation*}
 \forall \vphi_h \in V_h
\quad
 b_h(x^*_h,\vphi_h) = b_h(x,\vphi_h).
\end{equation*}
We deduce
\begin{align}\label{eq:qualxh*}
 \norm{x-x_h^*}
 \le
 \mu_h \big( 1+\tilde \kappa_h\,o(1) \big) \inf_{v_h\in V_h}\norm{x-v_h} 
\end{align}
by replacing $b$ with $b_h$ and $x_h$ with $x_h^*$ in
Lemma~\ref{L:qba-consts} and using the limiting compactness \eqref{compactness-VC} instead of the compactness of $C^*:V_2\to Q$ in the proof of Lemma~\ref{L:aqba-qualitative}.
Next, proceeding as in the proof of
Lemma~\ref{L:err=approx+consistency},
assumption~\eqref{eq:C(h)} on the exact solution gives
\begin{align*}
 \frac{\sqrt{\alpha}}{\tilde\kappa\mu}\norm{x_h^*-\Tilde{x}_h}
 =
 o\left(\inf_{v_h\in V_h}\norm{x-v_h}\right).
\end{align*}
We therefore conclude by inserting the two preceding relationships into the triangle inequality $\norm{x-\Tilde{x}_h}\le \norm{x- x_h^*}+\norm{x_h^*-\Tilde{x}_h}.$
\end{proof}

We turn to prove a quantitative quasi-best approximation result. To this end, we need to specify the qualitative assumptions~\eqref{eq:C(h)} and \eqref{compactness-VC} by quantitative ones. We shall assume that
\begin{align}
\label{eq:C(h)-quant}
 \sup_{\vphi_h \in V_h}
 \frac{\big\langle (C_h C_h^* - C C^*) z, \vphi_{h,2}\big\rangle_2}
 {\norm{\vphi_h}_{\alpha,h}}
 =
 O(h^\delta) \inf_{v_h\in V_h}\norm{x-v_h}
\end{align}
and that
\begin{align}
\label{VC-quant-compactness}
 C_h \in L(Q,H^{-s_2+\delta})\quad\text{is uniformly bounded with
	respect to $h>0$,}
\end{align}
where $\delta>0$ is suitably chosen. Note that \eqref{VC-quant-compactness} reduces for $C_h=C$ to the part regarding $C$ in the quantitative counterpart \eqref{quant-compactness} of the qualitative compactness~\eqref{compactness}.

\begin{thm}[Quantitative asymptotic quasi-best approximation with approximate control-action]
\label{T:VC-quant-as-qba}
Let $x$, $\Tilde{x}_h$, $h>0$, and $\Tilde{\kappa}$ be as in Lemma \ref{L:VC-qual-as-qba}. In addition, assume uniform stability \eqref{unif-qba} and that there exists $\delta>0$ such that we have \eqref{quantitative}, where~\eqref{VC-quant-compactness} replaces the assumption on $C$ in~\eqref{quant-compactness}.
If the exact solution $x$ satisfies also \eqref{eq:C(h)-quant} with the same $\delta$, we have 
\begin{align*}
\norm{x-\Tilde{x}_h}
\le
\mu_h \left(1 + \frac{\tilde{\kappa}}{\sqrt{\alpha}} \, O(h^\delta)
\right)
\inf_{v_h\in V_h}\|x-v_h\|
\quad\text{as}~h\to 0,
\end{align*} 
\end{thm}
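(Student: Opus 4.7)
The plan is to mirror the proof of Lemma~\ref{L:VC-qual-as-qba}, replacing the qualitative asymptotic result Lemma~\ref{L:aqba-qualitative} with its quantitative analog Theorem~\ref{T:aqba-quant} and the qualitative consistency assumption \eqref{eq:C(h)} with its quantitative version \eqref{eq:C(h)-quant}. I would introduce the auxiliary $b_h$-Galerkin projection $x_h^*\in V_h$ of $x$, defined by $b_h(x_h^*,\vphi_h)=b_h(x,\vphi_h)$ for all $\vphi_h\in V_h$, and split the error as $\norm{x-\Tilde{x}_h}\le \norm{x-x_h^*}+\norm{x_h^*-\Tilde{x}_h}$.

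For the first term I would apply Theorem~\ref{T:aqba-quant} to the perturbed bilinear form $b_h$ in place of $b$. The hypotheses \eqref{Sobolev}, \eqref{shift}, and \eqref{quant-approx} do not involve the control-action operator and transfer unchanged, while the role of the compactness estimate on $C$ in \eqref{quant-compactness} is taken over by the uniform-in-$h$ bound \eqref{VC-quant-compactness} on $C_h$. Tracing through the duality argument of Theorem~\ref{T:aqba-quant}, the constant $\bar M_C$ is simply replaced by $\sup_h\norm{C_h}_{L(Q,H^{-s_2+\delta})}$, so the same reasoning yields
\[
 \norm{x-x_h^*}\le \mu_h\bigl(1+\Tilde\kappa\,O(h^\delta)\bigr)\inf_{v_h\in V_h}\norm{x-v_h}.
\]

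For the consistency term I would repeat the short computation from the proof of Lemma~\ref{L:err=approx+consistency}. Galerkin orthogonality for $x_h^*$ combined with the identity $(b_h-b)(x,\vphi_h)=\tfrac{1}{\sqrt{\alpha}}\langle(C_hC_h^*-CC^*)z,\vphi_{h,2}\rangle_2$ and the discrete inf-sup bound \eqref{infsupVC} give
\[
 \norm{x_h^*-\Tilde{x}_h}\le \frac{\Tilde\kappa\,\mu_h}{\sqrt{\alpha}}\sup_{\vphi_h\in V_h}\frac{\langle(C_hC_h^*-CC^*)z,\vphi_{h,2}\rangle_2}{\norm{\vphi_h}_{\alpha,h}},
\]
and the hypothesis \eqref{eq:C(h)-quant} then bounds the right-hand side by $\tfrac{\Tilde\kappa}{\sqrt{\alpha}}\mu_h\,O(h^\delta)\inf_{v_h\in V_h}\norm{x-v_h}$. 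Summing the two estimates via the triangle inequality gives the claim.

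The main obstacle I anticipate is book-keeping of constants when transferring Theorem~\ref{T:aqba-quant} from the exact system to the perturbed one: one must verify that every quantity entering its proof (the norm of $C_h$ into $H^{-s_2+\delta}$, the regularity constants for $A$ and $A^*$, the Sobolev-norm comparison $C_S$, the interpolation constant $C_{\mathcal I}$, and the stability constant $\mu_h$) admits a bound independent of $h$. Uniform boundedness of $\mu_h$ is assumption \eqref{unif-qba}, uniform boundedness of $\Tilde M_h$ (hence of $\Tilde\kappa_h$) is implied by \eqref{VC-quant-compactness}, and the remaining constants are intrinsic to $A$ and the meshes, so once these are checked the argument is otherwise a routine adaptation.
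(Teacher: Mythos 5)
Your proposal matches the paper's proof: the paper likewise follows the argument of Lemma~\ref{L:VC-qual-as-qba}, replacing the qualitative consistency hypothesis \eqref{eq:C(h)} by \eqref{eq:C(h)-quant} and the qualitative estimate for the auxiliary $b_h$-Galerkin projection $x_h^*$ by the quantitative duality argument of Theorem~\ref{T:aqba-quant}, with \eqref{VC-quant-compactness} taking over the role of \eqref{quant-compactness}. Your additional remarks on the uniformity of the constants ($\mu_h$ via \eqref{unif-qba}, $\Tilde{M}_h$ via \eqref{VC-quant-compactness}) correctly fill in what the paper leaves implicit.
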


\begin{proof}
We follow the lines of the proof of Lemma~\ref{L:VC-qual-as-qba}, but replacing~\eqref{eq:C(h)} with \eqref{eq:C(h)-quant} and \eqref{eq:qualxh*} with a quantitative argument in the spirit of Theorem~\ref{T:aqba-quant}. To this end, it suffices to use~\eqref{VC-quant-compactness} instead of \eqref{quant-compactness}.
\end{proof}

We conclude this section by assessing the key assumptions~\eqref{eq:C(h)} and \eqref{eq:C(h)-quant} by a remark and an example.

\begin{rem}[Ensuring dominated consistency error]
\label{rem:assump-split}
As
\begin{equation*}
 \sup_{\vphi_h \in V_h}
  \frac{\big\langle (C_h C_h^* - C C^*) z, \vphi_{h,2}\big\rangle_2}
   {\norm{\vphi_h}_{\alpha,h}}
 \leq
 \|C_hC_h^*-CC^*\|_{L(V_2,V_{2}^*)}
\end{equation*}
for
\begin{equation*}
 \|C_hC_h^*-CC^*\|_{L(V_2,V_{2}^*)}
 \defas
 \sup_{\vphi_h \in V_h}
  \frac{\big\langle (C_h C_h^* - C C^*) z, \vphi_{h,2}\big\rangle_2}
   {\norm{\vphi_h}_2},
\end{equation*}
we may verify assumptions \eqref{eq:C(h)} and \eqref{eq:C(h)-quant} using  relationships for $\|C_hC_h^*-CC^*\|_{L(V_2,V_{2}^*)}$.
\end{rem}

\begin{ex}[Simple model optimization and piecewise constant controls]
Consider the setting of Example \ref{ex:sim-opt}, but with problem \eqref{sim-min} with linear finite elements for the constraint and piecewise constants for the control variable. In the light of Example \ref{ex:controldisc}, this full discretization can be cast into \eqref{mod-ropt-VC} with $C_h=P_hC$, where $P_h$ is the $L^2$-projection onto piecewise constants. By duality, we have
\begin{equation*}
 \|C_hC_h^*-CC^*\|_{L(V_2,V^*_{2})} \leq c_1 h^2,
\end{equation*}
where $c_1$ depends on the shape regularity of the underlying meshes.
Suppose that there is a constant $c_2$ such that
\begin{equation*}
 \inf_{v_h\in V_h} \norm{x-v_h} \geq c_2 h.
\end{equation*} This holds for example if the matrix norm of the Hessian of the exact state or its adjoint state are bounded away from 0 in a fixed subdomain. We conclude
\begin{equation*}
 \|C_hC_h^*-CC^*\|_{L(V_2,V^*_{2})}
 \leq
 c_1 h^2
 \leq
 \frac{c_1}{c_2} h \inf_{v_h\in V_h} \norm{x-v_h},
\end{equation*}
i.e., \eqref{eq:C(h)-quant} with $\delta=1$ and a constant depending on the  exact solution under consideration.
\end{ex}

\section{Analysis with Control Constraints}
\label{sec:5}
%
%
This section generalize our approach to optimization problems that are
nonlinear because of constraints on the control. 

\subsection{Control constraints and discretization}
%
%
Let $K \subset Q$ be the set of admissible controls. We assume that
\begin{equation}
\label{admissible-controls}
 \text{$K$ is nonempty, closed, and convex}
\end{equation}
and denote by $\Pi_{K}:Q\to K$ the projection operator onto $K$ which
is characterized by $\norm{q-\Pi_{K}q}_Q = \inf_{p \in K}
\norm{q-p}_Q$ or, equivalently, by
\begin{equation*}
 \forall p \in K
\quad
 (q - \Pi_{K}q, \Pi_{K}q - p)_Q
 \ge
 0.
\end{equation*}
The latter characterization implies
\begin{equation}
\label{eq-qmono-lip}
 (\Pi_{K}(q) - \Pi_{K}(p), q-p)_Q
 \ge
 \norm{\Pi_{K}(q)-\Pi_{K}(p)}_Q^2
\end{equation}
for all $q,p \in Q$, which in turn shows that the operator $\Pi_{K}$ is strongly monotone and Lipschitz continuous, in both cases with constant 1.

The generalization of problem~\eqref{mod-min} incorporating
convex control constraints is then the \emph{convex optimization problem}
\begin{equation}
\label{mod-min-cc}
 \min_{(q,u)\in K \times V_1}
  \frac12\norm{ Iu-\ud }_W^2 + \frac\alpha2 \norm{q}_Q^2
\quad\text{subject to}\quad
 Au = Cq.
\end{equation}
Thanks to \eqref{admissible-controls}, a solution $(q,u)$ is characterized by the existence of $z \in V$ such that the following counterpart of the rescaled optimality system \eqref{mod-opt} is satisfied:
\begin{equation}
\label{mod-opt-cc}
 Au = Cq,
\quad
 A^*z = \tfrac{1}{\sqrt\alpha}I^*(Iu-\ud),
\quad
  q = \Pi_{K}(-\tfrac{1}{\sqrt\alpha} C^*z).
\end{equation}
As in Section~\ref{sec:2}, we insert the third equation into the first one and consider the corresponding \emph{weak formulation of the rescaled and reduced optimality system}:
\begin{equation}
\label{mod-ropt-cc}
 \text{find $x\in V$ such that}
\quad \forall \vphi \in V \;\;
  b_K(x,\vphi)
  = -\tfrac{1}{\sqrt\alpha} (\ud,I\vphi_1)_W,
\end{equation}
where $b_K \defas a + c_{K,\alpha}$ and
\begin{equation*}
 c_{K,\alpha}(v,\vphi)
 \defas
 - \left(
  \Pi_{K}(-\tfrac{1}{\sqrt\alpha} C^*v_2), C^*\vphi_2
  \right)_Q
 -\tfrac{1}{\sqrt\alpha}(Iv_1,I\vphi_1)_W,
\end{equation*}
which already incorporates the $1/\sqrt{\alpha}$-scaling. In contrast to the previous sections, $c_{K,\alpha}$ and so $b_K$ are in general not linear in the first argument. Nonetheless, if we introduce the pseudometric
\begin{equation*}
 \delta_{K,\alpha}(v,w)^2
 \defas
 \alpha \norm{\Pi_{K}(-\tfrac{1}{\sqrt{\alpha}}C^*v_2) - \Pi_{K}(-\tfrac{1}{\sqrt{\alpha}}C^*w_2) }_Q^2
 +
 \norm{I(v_1-w_1)}_Q^2,
\end{equation*}
inequality \eqref{eq-qmono-lip} leads to the following replacement of the properties \eqref{cont-inf-sup-c} of the bilinear form $c$: if $v,w\in V$ and $\vphi = \big({-}(v_1-w_1), v_2 - w_2 \big)$, then
\begin{subequations}
\label{cont-coercivity-c-cc}
\begin{equation}
 c_{K,\alpha}(v,\vphi) - c_{K,\alpha}(w,\vphi)
 \geq
 \frac{1}{\sqrt{\alpha}} \delta_{K,\alpha}(v, w)^2,
\end{equation}
while, for any $v,w,\vphi \in V$ arbitrary, we have, 
\begin{equation}
\label{cont-c-cc}
 |c_{K,\alpha}(v,\vphi) - c_{K,\alpha}(w, \vphi)|
 \leq
 \frac{1}{\sqrt{\alpha}} \delta_{K,\alpha}(v,w) \snorm{\vphi}.
\end{equation}
\end{subequations}
In addition, we have, for $v, w \in V$,
\begin{equation}
\label{delta<enorm}
\delta_{K,\alpha}(v,w)
\leq
\snorm{v-w}.
\end{equation}
The continuity bound \eqref{cont-c-cc} leads to
\begin{equation}
\label{cont-cc}
 |b_K(v,\vphi) - b_K(w,\vphi)|
 \leq
 d_{K,\alpha}(v,w) \norm{\vphi}
\end{equation}
with the metric
\begin{equation*}
 d_{K,\alpha}(v,w)
 \defas
 M_a \norm{v-w} + \frac{M}{\sqrt{\alpha}}\delta_{K,\alpha}(v,w),
\quad
 v,w \in V.
\end{equation*}

Notice that the role of the two arguments of $c$ and $b_K$ cannot be interchanged. We adapt \eqref{inf-sup-ansatz} to this new situation in the following way: given $v,w \in V$, we choose $\vphi = T_K (v-w)$, where $T_K:V\to V$ is the linear operator given by 
\begin{equation}
\label{T_K}
 T_K\psi
 \defas
 m_a(A^{-1}J_2\psi_2, A^{-*}J_1\psi_1) + \gamma (-\psi_1,\psi_2),
\end{equation}
$\gamma$ as in \eqref{gamma}, and $J_i:V_i \to V_i^*$ is the Riesz map for $V_i$, $i=1,2$. In view of \eqref{inf-sup-const2}, we thus obtain the following counterpart of Theorem \ref{T:cont-inf-sup}.

\begin{thm}[Properties of form $b_K$]
\label{T:bK}
If we equip $V$ as trial space with $d_{K,\alpha}$ and as test space with $\norm{\cdot}$, then we have, for any $v,w,\vphi \in V$,
\begin{align*}
 b_K \big( v, T_K(v-w) \big) - b_K \big( w,T_K(v-w) \big)
 &\geq
 \frac{1+L}{1+2L} \frac{m_a}{M_a}\,d_{K,\alpha}(v,w) \norm{v-w}
\\
 &\geq
 \frac{1}{\kappa} \frac{m_a}{M_a}\,d_{K,\alpha}(v,w) \norm{T_K(v-w)}
\end{align*}
and
\begin{equation*}
 | b_K(v,\vphi) - b_K(w,\vphi) |
 \leq
 d_{K,\alpha}(v,w) \norm{\vphi},
\end{equation*}
where $\kappa$ is defined by \eqref{def-kappa}.
\end{thm}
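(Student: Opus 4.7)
The continuity claim is essentially \eqref{cont-cc}: I would decompose $b_K(v,\vphi)-b_K(w,\vphi)=a(v-w,\vphi)+[c_{K,\alpha}(v,\vphi)-c_{K,\alpha}(w,\vphi)]$, invoke \eqref{cont-inf-sup-vec-a} on the bilinear part and \eqref{cont-c-cc} with $\snorm{\vphi}\le M\norm{\vphi}$ from \eqref{e-V-norms} on the nonlinear part, and sum the two bounds to obtain exactly $d_{K,\alpha}(v,w)\norm{\vphi}$.

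For the lower bound I set $\psi\defas v-w$ and $\vphi\defas T_K\psi=w'+\gamma(-\psi_1,\psi_2)$ with $w'\defas m_a(A^{-1}J_2\psi_2,A^{-*}J_1\psi_1)$, and split $b_K=a+c_{K,\alpha}$. Since by construction $A^*w'_2=m_aJ_1\psi_1$ and $Aw'_1=m_aJ_2\psi_2$, the definition \eqref{vec-a} yields $a(\psi_1,w'_2)=m_a\norm{\psi_1}_1^2$ and $a(w'_1,\psi_2)=m_a\norm{\psi_2}_2^2$, while the skew summand contributes $a(\psi_1,\psi_2)+a(-\psi_1,\psi_2)=0$; thus $a(\psi,T_K\psi)=m_a\norm{\psi}^2$. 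The inf-sup bounds for $A$ and $A^*$ further give $\norm{w'}\le\norm{\psi}$, so $\norm{T_K\psi}\le(1+\gamma)\norm{\psi}$. The nonlinear form $c_{K,\alpha}$ is linear in its second argument, so I can split
\begin{equation*}
 c_{K,\alpha}(v,T_K\psi)-c_{K,\alpha}(w,T_K\psi)
 =\bigl[c_{K,\alpha}(v,w')-c_{K,\alpha}(w,w')\bigr]
  +\gamma\bigl[c_{K,\alpha}(v,(-\psi_1,\psi_2))-c_{K,\alpha}(w,(-\psi_1,\psi_2))\bigr].
\end{equation*}
The first bracket is bounded via \eqref{cont-c-cc} together with $\snorm{w'}\le M\norm{w'}\le M\norm{\psi}$, giving at worst $-\tfrac{M}{\sqrt\alpha}\delta_{K,\alpha}(v,w)\norm{\psi}$; the second bracket evaluates $c_{K,\alpha}$ in the one direction that activates the monotonicity-based coercivity \eqref{cont-coercivity-c-cc}, namely $(-\psi_1,\psi_2)=(-(v_1-w_1),v_2-w_2)$, and thus contributes at least $\tfrac{\gamma}{\sqrt\alpha}\delta_{K,\alpha}(v,w)^2$.

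Assembling the pieces produces
\begin{equation*}
 b_K(v,T_K\psi)-b_K(w,T_K\psi)\ge m_a\norm{\psi}^2
 -\tfrac{M}{\sqrt\alpha}\delta_{K,\alpha}(v,w)\norm{\psi}
 +\tfrac{\gamma}{\sqrt\alpha}\delta_{K,\alpha}(v,w)^2,
\end{equation*}
which is exactly the bilinear estimate inside \eqref{inf-sup-ansatz} with $\norm{\vphi}$, $\snorm{\vphi}$ replaced by $\norm{\psi}$, $\delta_{K,\alpha}(v,w)$. From here the argument transcribes line by line from Section~\ref{sec:2}: add and subtract $\tfrac{m_aM}{M_a\sqrt\alpha}\delta_{K,\alpha}(v,w)\norm{\psi}$, use $m_a\le M_a$ to absorb, and apply Young's inequality with $\epsilon=\tfrac{L}{1+2L}m_a$ and $\gamma$ as in \eqref{gamma} to recover $\tfrac{1+L}{1+2L}\tfrac{m_a}{M_a}d_{K,\alpha}(v,w)\norm{\psi}$; the alternative form with $\norm{T_K(v-w)}$ then follows from $\norm{T_K\psi}\le(1+\gamma)\norm{\psi}$ together with the identity $\kappa=\tfrac{1+2L}{1+L}(1+\gamma)$ from \eqref{kappa}. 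The one genuinely nonlinear ingredient—and the step that requires the most care—is matching the test direction $(-\psi_1,\psi_2)$ inside $T_K\psi$ with the first-argument difference $v-w$ so that the projection-monotonicity in \eqref{eq-qmono-lip} can fire; once this alignment is enforced, the asymmetric choice of $T_K$ (linear in $\psi$, even though $b_K$ is nonlinear in its first slot) reduces the proof to the bookkeeping already performed for Theorem~\ref{T:cont-inf-sup}.
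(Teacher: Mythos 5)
Your proposal is correct and follows essentially the same route the paper intends: the paper proves Theorem~\ref{T:bK} precisely by adapting the ansatz \eqref{inf-sup-ansatz} with the test function $T_K(v-w)$, so that $a(v-w,T_K(v-w))=m_a\norm{v-w}^2$, the monotonicity \eqref{eq-qmono-lip} (via \eqref{cont-coercivity-c-cc}) supplies the $\gamma\delta_{K,\alpha}^2/\sqrt{\alpha}$ term, and the Young-inequality bookkeeping of \eqref{inf-sup-const2} carries over verbatim with $\norm{\vphi},\snorm{\vphi}$ replaced by $\norm{v-w},\delta_{K,\alpha}(v,w)$. Your verification of the identities $A^*w_2'=m_aJ_1\psi_1$, $Aw_1'=m_aJ_2\psi_2$, the bound $\norm{T_K\psi}\le(1+\gamma)\norm{\psi}$, and the continuity estimate via \eqref{cont-c-cc} and \eqref{e-V-norms} fills in exactly the details the paper leaves implicit.
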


Also here, we can conclude existence and uniqueness as a side-product.
\begin{cor}[Well-posedness with control constraints]
\label{C:uniqueness-cc} 
The optimization problem~\eqref{mod-ropt-cc} has a unique solution.
\end{cor}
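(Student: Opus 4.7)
The plan splits cleanly into existence and uniqueness, with Theorem~\ref{T:bK} doing essentially all of the nontrivial work on the uniqueness side. The overall strategy is simply to mimic the argument that produced Corollary~\ref{C:uniqueness} in the linear case, but using the nonlinear monotonicity furnished by Theorem~\ref{T:bK} in place of the inf-sup bound~\eqref{inf-sup-const2}.

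\textbf{Uniqueness.} Suppose $x_1,x_2\in V$ both solve~\eqref{mod-ropt-cc}. Subtracting the two defining identities gives $b_K(x_1,\vphi)-b_K(x_2,\vphi)=0$ for every $\vphi\in V$. I would test with the distinguished element $\vphi=T_K(x_1-x_2)\in V$ from~\eqref{T_K} and invoke the lower bound of Theorem~\ref{T:bK}, obtaining
\begin{equation*}
 0
 =
 b_K\bigl(x_1,T_K(x_1-x_2)\bigr)-b_K\bigl(x_2,T_K(x_1-x_2)\bigr)
 \geq
 \frac{1+L}{1+2L}\frac{m_a}{M_a}\,d_{K,\alpha}(x_1,x_2)\,\norm{x_1-x_2}.
\end{equation*}
Since $d_{K,\alpha}(v,w)\geq M_a\norm{v-w}$ by the very definition of the metric, this forces $\norm{x_1-x_2}=0$, hence $x_1=x_2$.

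\textbf{Existence.} I would pass back to the primal problem~\eqref{mod-min-cc} and lift a minimizer to a solution of~\eqref{mod-ropt-cc}. Assumption~\eqref{admissible-controls} makes $K$ nonempty, closed, and convex; after using the state equation to express $u$ in terms of $q$, the reduced objective is continuous, convex, coercive (via the Tikhonov term $\tfrac{\alpha}{2}\norm{q}_Q^2$) and strictly convex in $q$. Standard existence theory for convex minimization in Hilbert spaces---e.g.\ Lions~\cite{Lions:1971} or Tr\"oltzsch~\cite{Troeltzsch:2005}---then yields a (unique) minimizer $(q,u)\in K\times V_1$. Defining the adjoint state $z\in V_2$ as the solution of the linear well-posed equation $A^*z=\tfrac{1}{\sqrt{\alpha}}I^*(Iu-\ud)$, the first-order KKT conditions for~\eqref{mod-min-cc}, rewritten in the projected form $q=\Pi_K(-\tfrac{1}{\sqrt{\alpha}}C^*z)$, are exactly~\eqref{mod-opt-cc}; inserting this $q$ into the state equation and combining with the equation for $z$ produces the weak formulation~\eqref{mod-ropt-cc} for $x=(u,z)$.

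\textbf{Expected main obstacle.} There is no real analytical difficulty left: the hard work is already encoded in Theorem~\ref{T:bK}. The one mildly delicate point is recording carefully the correspondence between solutions of~\eqref{mod-ropt-cc} and first-order critical points of~\eqref{mod-min-cc}, so that existence for the minimization problem truly transfers to~\eqref{mod-ropt-cc}. This is the nonlinear analogue of the correspondence already discussed in Section~\ref{sec:2}, the only change being that the linear identity $\sqrt{\alpha}q=-C^*z$ is replaced by the projection identity $q=\Pi_K(-\tfrac{1}{\sqrt{\alpha}}C^*z)$, which is already built into $c_{K,\alpha}$ and hence into $b_K$.
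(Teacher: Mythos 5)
Your proposal is correct, but it reaches existence by a genuinely different route than the paper. For uniqueness you and the paper use the same key ingredient, the monotonicity encoded in Theorem~\ref{T:bK}: you test the difference of two solutions with $T_K(x_1-x_2)$ and use $d_{K,\alpha}(v,w)\ge M_a\norm{v-w}$, which is fine and even slightly leaner, since you only need $T_K$ to be well defined, not invertible. For existence, however, the paper never returns to the primal problem: it proves existence and uniqueness in one stroke by applying Zarantonello's theorem on strongly monotone, Lipschitz operators (\cite[Theorem~25.B]{ZeidlerIIB}) to the nonlinear operator $\widetilde{B}_K$ defined by $\langle \widetilde{B}_K v,\vphi\rangle = b_K(v,T_K\vphi)$, after first showing that $T_K$ is a linear isomorphism on $(V,\norm{\cdot})$ via \eqref{inf-sup-ansatz;norm-test-fct}, \eqref{cont} and \eqref{inf-sup-const2}; strong monotonicity and Lipschitz continuity of $\widetilde{B}_K$ follow from Theorem~\ref{T:bK}, \eqref{cont} and \eqref{delta<enorm}, and one concludes from $T_K^{-*}\widetilde{B}_K v = b_K(v,\cdot)$. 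Your alternative obtains a minimizer of \eqref{mod-min-cc} by the direct method (coercivity from the Tikhonov term, convexity, $K$ nonempty closed convex) and lifts it to \eqref{mod-ropt-cc} through the adjoint equation and the projection identity $q=\Pi_K(-\tfrac{1}{\sqrt{\alpha}}C^*z)$. This is more elementary and exploits the convex structure, but it rests on the correspondence between \eqref{mod-min-cc}, \eqref{mod-opt-cc} and \eqref{mod-ropt-cc}, which the paper asserts only informally before \eqref{mod-opt-cc}; you rightly flag that this correspondence is the one point that must be recorded carefully. The paper's operator-theoretic route buys a proof that is intrinsic to the reduced system (and thus independent of that correspondence), at the cost of the extra step of verifying \eqref{T_K-isomorfism} and invoking the monotone-operator theorem.
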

\begin{proof}
We shall apply the Zarantonello's theorem of strongly monotone operators
\cite[Theorem 25.B]{ZeidlerIIB} in the Hilbert space $(V,\norm{\cdot})$.
To prepare this, we first observe that
\begin{equation}
\label{T_K-isomorfism}
 \text{$T_K$ is a linear isomorphism on $(V,\norm{\cdot})$.}
\end{equation}
Indeed, it is continuous with constant $1+\gamma$ owing to \eqref{inf-sup-ansatz;norm-test-fct} and boundedly invertible on account of the consequence
\begin{equation*}
 \frac{1+L}{1+2L} \frac{m_a}{M_a} \norm{v} \norm{v}_\alpha
 \leq
 b(T_K v, v)
 \leq
 \norm{T_K v} \norm{v}_{\alpha} 
\end{equation*}
of \eqref{cont} and \eqref{inf-sup-const2} for the bilinear form $b$.
Let us consider the nonlinear operator $\widetilde{B}_K : V\to V^*$ defined by 
\begin{align*}
 \langle \widetilde{B}_K v,\,\vphi \rangle
 =
 b_K(v,T_K \vphi),
\end{align*}
where $\langle\cdot,\cdot\rangle$ denotes the duality pairing associated with $(V,\norm{\cdot})$. Making use of Theorem~\ref{T:bK}, \eqref{cont} and \eqref{delta<enorm}, we see that, for all $v,w\in V$,
\begin{equation*}
 \langle \widetilde{B}_K v - \widetilde{B}_K w, v-w \rangle
 \geq
 \frac{1+L}{1+2L} m_a \norm{v-w}^2
\end{equation*}
and
\begin{equation*}
 \langle \widetilde{B}_K v - \widetilde{B}_K w, \vphi \rangle
 \leq
 \left( 
  M_a + \frac{M^2}{\sqrt{\alpha}}
 \right) (1+\gamma) \norm{v-w} \norm{\vphi}.
\end{equation*}
Hence, $\widetilde{B}_K$ is strongly monotone and Lipschitz continuous
and therefore boundedly invertible by \cite[Theorem
25.B]{ZeidlerIIB}. In light of~\eqref{T_K-isomorfism}, we can conclude by noting $T_K^{-*}\widetilde{B}_Kv = b_K(v,\cdot)$ for all $v\in V$.
\end{proof}

In order to discretize the optimization problem \eqref{mod-min-cc} with control constraints, we proceed as in Section~\ref{sec:vardisc}. Introducing the discrete space $V_h = V_{h,1} \times V_{h,2}$ as therein, the variational discretization can be characterized as follows:
\begin{equation}
\label{mod-ropt-cc-h}
 \text{find } x_h \in V_h \text{ such that}
\quad
 \forall \vphi_h \in V_h \;\; b_K(x_h,\vphi_h)
 =
 - \frac{1}{\sqrt{\alpha}} (\ud,I\vphi_{h,1})_W. 
\end{equation}
Here we need that $\Pi_{K}(-C^*v_{h,2}/\sqrt{\alpha})$ can be
evaluated exactly for $v_{h,2} \in V_{h,2}$. This occurs, for example,
when we consider \eqref{sim-min} with box constraints and discretize
with linear finite elements. If $\Pi_{K}$ has to be approximated, the
subsequent error analysis involves additional technicalities, similar
to those addressed in Section~\ref{sec:4}. 

Existence and uniqueness of solutions to \eqref{mod-ropt-cc-h} can be
established in a similar way as Corollary \ref{C:uniqueness-cc}. Using
\eqref{adjoint-norm} as in norm in $V_2$, the major change is to
replace the operator \eqref{T_K} by $T_{K,h}: V_h \to V_h$ given by 
\begin{equation}
 T_{K,h}\psi_h
 \defas
 \frac{1}{\mu_h} (A_h^{-1}J_{h,2}\psi_2, A_h^{-*}J_{h,1}\psi_{h,1})
  + \gamma (-\psi_{h,1},\psi_{h,2}),
\end{equation}
where $A_h v_{h,1} \defas a(v_{h,1},\cdot)_{|V_{h,2}}$, $v_{h,1} \in V_{h,1}$, is the discrete counterpart of $A$, $1/\mu_h$ is its inf-sup-constant, $\gamma$ is as in \eqref{def-kappa}, and $J_{h,i}:V_{h,i} \to V_{h,i}^*$ is the Riesz map for $V_{h,i}$, $i=1,2$.

\subsection{Quasi-best approximation}
%
We analyze the quasi-best-approximation properties of the nonlinear variational discretization \eqref{mod-ropt-cc-h}, adopting again
\begin{equation*}
 \text{\eqref{adjoint-norm} as norm in $V_2$.}
\end{equation*}

The following non-asymptotic result draws heavily on Theorem~\ref{T:bK}, which needed  an $\alpha$-dependent error notion for $V$ as trial space.

\begin{thm}[Non-asymptotic quasi-best approximation with control constraints]
\label{T:qba-cc}
If $x_h$ is the approximation given by \eqref{mod-ropt-cc-h} to an arbitrary solution $x$ of \eqref{mod-ropt-cc}, then its error is quasi-best in $V_h$ in that
\begin{equation*}
  d_{K,\alpha}(x,x_h)
  \leq
  (\kappa_h \mu_h+1) \inf_{v_h \in V_h} d_{K,\alpha} (x, v_h),
\end{equation*}
where $\kappa_h$ and $\mu_h$ are as in Theorem \ref{T:qba}.
\end{thm}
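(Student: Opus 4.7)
My plan is a C\'ea-type argument adapted to the variational-inequality structure of \eqref{mod-ropt-cc}--\eqref{mod-ropt-cc-h}. The two essential inputs are: (i) the Galerkin-orthogonality relation $b_K(x,\vphi_h) = b_K(x_h,\vphi_h)$ for all $\vphi_h \in V_h$, obtained from the fact that both problems have the same linear right-hand side; and (ii) a discrete counterpart of Theorem \ref{T:bK} on $V_h$, with monotonicity/Lipschitz constant $1/(\kappa_h\mu_h)$, built by replacing $T_K$ in \eqref{T_K} with $T_{K,h}$. Once these are in hand, a triangle inequality for $d_{K,\alpha}$ completes the argument.

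First I would record that $d_{K,\alpha}$ does satisfy the triangle inequality: the term $M_a\norm{\cdot-\cdot}$ is the norm-induced distance on $V$, and $\delta_{K,\alpha}$ is the Euclidean combination of two $1$-Lipschitz pseudometrics (one built from $v_1\mapsto Iv_1$ and the other from the nonexpansive map $v_2 \mapsto \sqrt{\alpha}\,\Pi_{K}(-C^*v_2/\sqrt{\alpha})$), so Minkowski's inequality applies. Fixing an arbitrary $v_h\in V_h$, I would set $w_h \defas T_{K,h}(v_h-x_h) \in V_h$ and apply the discrete analog of Theorem \ref{T:bK} to the pair $(v_h,x_h)$, followed by Galerkin orthogonality and the Lipschitz bound in Theorem \ref{T:bK}:
\begin{equation*}
\frac{d_{K,\alpha}(v_h,x_h)\,\norm{w_h}}{\kappa_h\mu_h}
\leq
b_K(v_h,w_h) - b_K(x_h,w_h)
=
b_K(v_h,w_h) - b_K(x,w_h)
\leq
d_{K,\alpha}(v_h,x)\,\norm{w_h}.
\end{equation*}
Dividing by $\norm{w_h}$ gives $d_{K,\alpha}(v_h,x_h) \leq \kappa_h\mu_h\,d_{K,\alpha}(x,v_h)$, whence the triangle inequality $d_{K,\alpha}(x,x_h)\leq d_{K,\alpha}(x,v_h) + d_{K,\alpha}(v_h,x_h)$ combined with the infimum over $v_h\in V_h$ yields the claim.

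The only nontrivial step is the discrete counterpart of Theorem \ref{T:bK}, and this is where I expect the bookkeeping to be most delicate. I would obtain it by literally repeating the proof of Theorem \ref{T:bK}, but with $T_K$ replaced by $T_{K,h}$: the inf-sup duality of $a$ on $V_1\times V_2$ (constant $m_a$) is taken up by its discrete version on $V_{h,1}\times V_{h,2}$ (constant $1/\mu_h$, cf.\ \eqref{inf-sup-ah}). Since the choice \eqref{adjoint-norm} normalizes $M_a = m_a = 1$, substituting $m_a \to 1/\mu_h$ in \eqref{gamma} and \eqref{kappa} reproduces exactly the constant $\kappa_h$ of Theorem \ref{T:qba}, and the monotonicity bound becomes $\frac{1}{\kappa_h\mu_h}\,d_{K,\alpha}(v_h,y_h)\,\norm{T_{K,h}(v_h-y_h)}$ for all $v_h,y_h\in V_h$. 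Once this discrete coercivity bound is established, the remainder of the argument is purely algebraic.
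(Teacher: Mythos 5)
Your proposal is correct and follows essentially the same route as the paper's proof: the triangle inequality for $d_{K,\alpha}$, the discrete counterpart of Theorem~\ref{T:bK} on $V_h$ obtained by replacing $T_K$ with $T_{K,h}$ and $m_a$, $M_a$, $\kappa$ with $1/\mu_h$, $1$, $\kappa_h$, then Galerkin orthogonality $b_K(x_h,\vphi_h)=b_K(x,\vphi_h)$ and the Lipschitz bound, testing with $\vphi_h=T_{K,h}(v_h-x_h)$. Your extra verification that $d_{K,\alpha}$ indeed satisfies the triangle inequality is a welcome detail the paper leaves implicit; just note, as the paper does, that the invertibility of $T_{K,h}$ covers the degenerate case $T_{K,h}(v_h-x_h)=0$ before dividing by its norm.
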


\begin{proof}
Given any $v_h \in V_h$, we first write
\begin{equation}
\label{bad-triangle-ineq}
  d_{K,\alpha}(x,x_h)
  \leq
  d_{K,\alpha}(x,v_h) + d_{K,\alpha}(v_h,x_h).
\end{equation}
To bound the second term, we employ Theorem \ref{T:bK} with, respectively, $V_h$, $T_{K,h}$, $1/\mu_h$, $1$, and $\kappa_h$ in place of $V$, $T_K$, $m_a$, $M_a$, and $\kappa$. Writing $\vphi_h = T_{K,h}(v_h-x_h)$, the definitions of $x$ and $x_h$ thus yield, 
\begin{align*}
 \frac{1}{\kappa_h \mu_h} d_{K,\alpha}(v_h,x_h) \norm{\vphi_h}
 &\leq
 b_K(v_h,\vphi_h) - b_K(x_h,\vphi_h)
\\
 &=
 b_K(v_h,\vphi_h) - b_K(x,\vphi_h)
 \leq
 d_{K,\alpha}(x,v_h) \norm{\vphi_h}
\end{align*} 
and the claimed inequality is established as $T_{K,h}$ is invertible.
\end{proof}

The ``$+1$'' in the bound for the quasi-best-approximation constant in Theorem~\ref{T:qba-cc} arises from the triangle inequality \eqref{bad-triangle-ineq}, which is avoided in deriving in \eqref{qba-const}.
Yet, the following asymptotic quasi-best approximation results involving the generalized Ritz projection from \eqref{qba-Rh} are not affected by such an augmentation.

\begin{lem}[Nonlinear variational and generalized Ritz approximations]
\label{L:var-constraint-disc}
Let $x$ and $x_h$ be as in Theorem~\ref{T:qba-cc}. The generalized
Ritz projection $R_hx$ of $x$ and $x_h$ are related by 
\begin{equation*}
 d_{K,\alpha}(x_h,R_hx)
 \leq
 \kappa_h \mu_h \frac{M}{\sqrt{\alpha}} \snorm{x-R_hx},
\end{equation*}
where $\kappa_h$ and $\mu_h$ are as in Theorem~\ref{T:qba}.
\end{lem}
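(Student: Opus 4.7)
The plan is to mimic the proof of Theorem~\ref{T:qba-cc}, but instead of a triangle inequality comparing $x_h$ with an arbitrary $v_h \in V_h$, to exploit the Galerkin orthogonality of the linear part $a$ against the specific choice $R_h x$. Because $R_h x \in V_h$ is not available as a feasible ``test" on the continuous side, I need a discrete monotonicity argument in the spirit of Theorem~\ref{T:bK} applied on $V_h$.

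First, I would apply the discrete version of Theorem~\ref{T:bK}, formulated on the pair $(V_h, T_{K,h})$, to $v = R_h x$ and $w = x_h$. Thanks to the redefinition \eqref{adjoint-norm} we have $M_a = 1$, and by \eqref{inf-sup-ah} the discrete inf-sup constant is $1/\mu_h$. Setting $\varphi_h \defas T_{K,h}(R_h x - x_h) \in V_h$, this yields
\begin{equation*}
 b_K(R_h x, \varphi_h) - b_K(x_h, \varphi_h)
 \geq
 \frac{1}{\kappa_h \mu_h} \, d_{K,\alpha}(R_h x, x_h) \, \norm{\varphi_h}.
\end{equation*}

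Second, I would use Galerkin orthogonality. Since both $x$ and $x_h$ solve their respective optimality systems against the same right-hand side when tested with $\varphi_h \in V_h$, we have $b_K(x, \varphi_h) = b_K(x_h, \varphi_h)$. Splitting $b_K = a + c_{K,\alpha}$ and invoking the defining property $a(R_h x - x, \varphi_h) = 0$ of the generalized Ritz projection, the left-hand side reduces to $c_{K,\alpha}(R_h x, \varphi_h) - c_{K,\alpha}(x, \varphi_h)$. Bounding this via the Lipschitz estimate \eqref{cont-c-cc}, together with \eqref{delta<enorm} to pass from $\delta_{K,\alpha}(R_h x, x)$ to $\snorm{R_h x - x}$ and \eqref{e-V-norms} to pass from $\snorm{\varphi_h}$ to $M \norm{\varphi_h}$, gives
\begin{equation*}
 \bigl|c_{K,\alpha}(R_h x, \varphi_h) - c_{K,\alpha}(x, \varphi_h)\bigr|
 \leq
 \frac{M}{\sqrt{\alpha}} \, \snorm{x - R_h x} \, \norm{\varphi_h}.
\end{equation*}
Combining this with the discrete monotonicity estimate and cancelling $\norm{\varphi_h}$ concludes the proof.

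The only delicate point is verifying that Theorem~\ref{T:bK} transfers verbatim to the discrete level, but this is exactly the same transition already carried out for the linear case between Theorem~\ref{T:cont-inf-sup} and Theorem~\ref{T:qba}, using $T_{K,h}$ in place of $T_K$. I do not expect any genuine obstacle: the key insight of the statement is that, unlike the triangle-inequality-based Theorem~\ref{T:qba-cc}, comparing $x_h$ directly with $R_h x$ eliminates the ``$+1$'' and trades the full metric $d_{K,\alpha}(x, R_h x)$ on the right-hand side for the smaller seminorm $\snorm{x - R_h x}$, weighted by $M/\sqrt{\alpha}$.
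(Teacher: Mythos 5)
Your proposal is correct and follows essentially the same route as the paper's proof: apply the discrete version of Theorem~\ref{T:bK} (with $V_h$, $T_{K,h}$, $1/\mu_h$, $1$, $\kappa_h$) to the pair $x_h$, $R_hx$, use $b_K(x_h,\vphi_h)=b_K(x,\vphi_h)$, cancel the $a$-part by the Ritz property, and bound the remaining $c_{K,\alpha}$-difference via \eqref{cont-c-cc}, \eqref{delta<enorm}, and \eqref{e-V-norms}. The only difference is the immaterial swap of the roles of $x_h$ and $R_hx$ in the monotonicity step.
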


\begin{proof}
Applying Theorem \ref{T:bK} with the setting as in Theorem~\ref{T:qba-cc}, writing $\vphi_h = T_{K,h}(x_h-R_hx)$, and recalling \eqref{delta<enorm}, we derive
\begin{align*}
 \frac{1}{\kappa_h\mu_h} d_{K,\alpha}(x_h,R_h x) \norm{\vphi_h}
 &\leq
 b_K(x_h,\vphi_h) - b_K(R_hx,\vphi_h)
\\
 &=
 b_K(x,\vphi_h) - b_K(R_hx,\vphi_h)
\\
 &=
 c_{K,\alpha}(x,\vphi_h) - c_{K,\alpha}(R_hx,\vphi_h)
 \leq
 \frac{M}{\sqrt{\alpha}} \snorm{x-R_hx} \norm{\vphi_h}
\end{align*}
and, again thanks to the invertibility of $T_{K,h}$, the proof is finished.
\end{proof}

Let us sharpen Lemma~\ref{L:var-constraint-disc} with the help of the additional assumptions and arguments from Section~\ref{S:asym-qba} regarding the linear optimality system. 

\begin{thm}[Supercloseness to the generalized Ritz approximation]
\label{T:supercloseness}
Let $x$, $x_h$, and $R_hx$ be as in Lemma \ref{L:var-constraint-disc}.
Moreover, assume \eqref{unif-qba} and define $\bar{\kappa}$ as in Lemma~\ref{L:aqba-qualitative}.
If \eqref{qualitative} holds, then
\begin{equation*}
 d_{K,\alpha}(x_h,R_hx)
 \leq
 \frac{M}{\sqrt{\alpha}} \bar{\kappa} \bar{\mu} \, o(\norm{x-R_hx})
\text{ as }h\to0.
\end{equation*}
More specifically, if \eqref{quantitative} holds, then
\begin{equation*}
 d_{K,\alpha}(x_h,R_hx)
 \leq
 \frac{M}{\sqrt{\alpha}} \bar{\kappa} \bar{\mu} \, O(h^\delta\norm{x-R_hx})
\text{ as }h\to0.
\end{equation*}
For the $\alpha$-dependence of $\bar{\kappa}$, cf.\ Remark~\ref{R:vanish-regularization}.
\end{thm}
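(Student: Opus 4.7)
The plan is to combine the nonlinear bound of Lemma~\ref{L:var-constraint-disc} with the uniform decay estimates for the seminorm of the generalized Ritz error already established in the proofs of Lemma~\ref{L:aqba-qualitative} and Theorem~\ref{T:aqba-quant}. Observe that Lemma~\ref{L:var-constraint-disc} already yields
\begin{equation*}
 d_{K,\alpha}(x_h,R_hx)
 \leq
 \kappa_h \mu_h \frac{M}{\sqrt{\alpha}} \snorm{x-R_hx}
 \leq
 \bar{\kappa} \bar{\mu}\,\frac{M}{\sqrt{\alpha}} \snorm{x-R_hx},
\end{equation*}
where in the second step I invoke the uniform stability \eqref{unif-qba} together with monotonicity of $\kappa_h$ in $\mu_h$. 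It therefore only remains to sharpen the factor $\snorm{x-R_hx}$ to $o(\norm{x-R_hx})$ and $O(h^\delta\norm{x-R_hx})$, respectively.

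The key observation, which is the heart of the argument, is that $R_h$ is a projection on $V$: each generalized Ritz component $R_{h,i}$ is the identity on $V_{h,i}$ by uniqueness of the Petrov--Galerkin solution, so $R_h(R_hx)=R_hx$ and hence
\begin{equation*}
 x-R_hx=(I-R_h)(x-R_hx).
\end{equation*}
Applying the uniform bound from \eqref{unif-conv}, established in the proof of Lemma~\ref{L:aqba-qualitative}, to the element $w=x-R_hx\in V$ then gives
\begin{equation*}
 \snorm{x-R_hx}
 =\snorm{(I-R_h)w}
 \leq\Bigl(\sup_{\norm{v}=1}\snorm{v-R_hv}\Bigr)\norm{w}
 =o(1)\norm{x-R_hx}
\end{equation*}
as $h\to0$. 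Substitution into the display above yields the qualitative claim. Under \eqref{quantitative}, exactly the same computation with \eqref{quant-conv} from the proof of Theorem~\ref{T:aqba-quant} in place of \eqref{unif-conv} gives $\snorm{x-R_hx}\leq O(h^\delta)\norm{x-R_hx}$, from which the quantitative claim follows.

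I do not foresee a real obstacle: the two uniform convergence estimates \eqref{unif-conv} and \eqref{quant-conv} have been proved already, and the trick $x-R_hx=(I-R_h)(x-R_hx)$ is what converts a bound by $\norm{v}$ into a bound by $\norm{v-R_hv}$. The only small care required is to verify that $R_h$ is indeed a projection on the product space $V$, which is immediate from the corresponding statement for each component and the product structure of $R_h$ given after~\eqref{qba-Rh}.
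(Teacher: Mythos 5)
Your proof is correct, and it follows the paper's overall strategy of reducing everything, via Lemma~\ref{L:var-constraint-disc} and $\kappa_h\mu_h\le\bar\kappa\bar\mu$, to the estimate $\snorm{x-R_hx}=o(\norm{x-R_hx})$ resp.\ $O(h^\delta\norm{x-R_hx})$. Where you diverge is in how that estimate is obtained, chiefly in the qualitative case. You exploit the idempotency of $R_h$ (indeed valid: nondegeneracy of $a$ on $V_{h,1}\times V_{h,2}$ makes each $R_{h,i}$ the identity on $V_{h,i}$), write $x-R_hx=(I-R_h)(x-R_hx)$, and apply the uniform-over-the-unit-ball convergence \eqref{unif-conv} resp.\ \eqref{quant-conv} to the normalized error. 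The paper instead reruns a weak-compactness argument tailored to the specific sequence $d_k=(x-R_kx)/\norm{x-R_kx}$: this sequence is bounded by construction, Galerkin orthogonality of $x-R_kx$ plus \eqref{approximability} and \eqref{cont-inf-sup-vec-a} force the weak limit to vanish, and \eqref{compactness} upgrades this to $\snorm{d_k}\to0$. Your route is shorter and fully modular, since it reuses results already proved verbatim; the paper's route has the advantage that, for this step, it does not rely on the uniform stability \eqref{unif-qba} (boundedness of $d_k$ is automatic), which is why the authors note, citing \cite{FeiFuehPraet:2014}, that such a per-solution argument works under weaker assumptions on $(V_h)_h$ — under the theorem's stated hypotheses, however, both arguments are equally valid. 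For the quantitative claim the two proofs essentially coincide: the paper applies the duality step of Theorem~\ref{T:aqba-quant} to $v=x-R_hx$, which implicitly uses the same fact $R_h(x-R_hx)=0$ that you state explicitly, while you invoke \eqref{quant-conv} applied to the normalized error.
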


\begin{proof}
In view of Lemma \ref{L:var-constraint-disc}, it suffices to show $\snorm{x-R_hx}=o(\norm{x-R_hx})$. To this end, we modify the argument in Lemma~\ref{L:aqba-qualitative} slightly; a similar argument has been used by~\cite{FeiFuehPraet:2014} under weaker assumptions on $(V_h)_h$.
Let $(h_k)_k$ be any sequence with $\lim_{k\to\infty} h_k = 0$ and, writing $k$ whenever $h_k$ is an index, consider
\begin{equation*}
 d_k
 \defas
 \begin{cases}
  \displaystyle \frac{x-R_kx}{\norm{x-R_kx}}, &\text{if }x\neq R_kx, \\
  0, &\text{otherwise.}
 \end{cases}
\end{equation*}
The sequence $(d_k)_k$ is bounded in the Hilbert space $V$ by definition. For its weak limit $d\in V$, we have
\begin{equation*}
 a(d,\vphi)
 =
 a(d-d_k,\vphi)
 +
 a(d_k, \vphi - \vphi_k) 
\end{equation*}
for arbitrary $\vphi \in V$ and $\vphi_k \in V_h$. Consequently, \eqref{approximability}, $k\to\infty$, and \eqref{cont-inf-sup-vec-a} yield $d=0$. In view of \eqref{compactness}, $d_k \to 0$ weakly in $V$ then implies $\snorm{d_k} \to 0$.

For the second statement, we just note that the main step of the proof of Theorem~\ref{T:aqba-quant} with $v=x-R_hx$ leads to $\snorm{v-R_hv}=O(h^\delta\norm{x-R_hx})$.
\end{proof}



In view of the inverse triangle inequality 
\begin{equation*}
 \big| \norm{x-x_h} - \norm{x-R_hx} \big|
 \leq
 \norm{x_h-R_hx}
 \leq
 d_{K,\alpha}(x_h,R_h x),
\end{equation*}
Theorem \ref{T:supercloseness} readily yields the following asymptotic quasi-best approximation result.

\begin{cor}[Asymptotic quasi-best approximation with control constraints]
\label{C:aqba-cc}
Let $\nu_{K,h}$ be the quasi-best-approximation constant for the nonlinear variational discretization \eqref{mod-ropt-cc-h} with respect to $\norm{\cdot}$.
Moreover, assume \eqref{unif-qba} and define $\bar{\kappa}$ as in Lemma~\ref{L:aqba-qualitative}. If \eqref{qualitative} holds, then
\begin{equation*}
 \nu_{K,h}
 \leq
 \mu_h \left( 1 + \frac{M}{\sqrt{\alpha}} \bar{\kappa} \, o(1) \right)
\text{ as }h\to0.
\end{equation*}
More specifically, if \eqref{quantitative} holds, then
\begin{equation*}
 \nu_{K,h}
 \leq
 \mu_h \left( 1 + \frac{M}{\sqrt{\alpha}} \bar{\kappa} \, O(h^\delta) \right)
\text{ as }h\to0.
\end{equation*}
For the $\alpha$-dependence of $\bar{\kappa}$, cf.\ Remark~\ref{R:vanish-regularization}.
\end{cor}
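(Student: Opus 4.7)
The claim is a direct consequence of Theorem~\ref{T:supercloseness} combined with the inverse triangle inequality stated immediately before the corollary. My plan is as follows. Given an arbitrary $v_h \in V_h$, I start from the splitting
\begin{equation*}
 \norm{x-x_h}
 \le
 \norm{x-R_hx} + \norm{R_hx - x_h}
 \le
 \norm{x-R_hx} + d_{K,\alpha}(x_h,R_hx),
\end{equation*}
where the second inequality uses $\norm{\cdot}\le d_{K,\alpha}(\cdot,\cdot)$, which in turn rests on $M_a=1$ granted by the standing choice~\eqref{adjoint-norm} for the norm on $V_2$ adopted at the beginning of Section~5.2.

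Next, the bound for $d_{K,\alpha}(x_h,R_hx)$ is supplied directly by Theorem~\ref{T:supercloseness}: under the qualitative assumptions~\eqref{qualitative} it reads $d_{K,\alpha}(x_h, R_h x) \leq \frac{M}{\sqrt{\alpha}} \bar{\kappa} \bar{\mu} \cdot o(\norm{x - R_h x})$ as $h\to0$, while under the quantitative assumptions~\eqref{quantitative} the same estimate holds with $O(h^\delta \norm{x-R_hx})$ in place of $o(\norm{x-R_hx})$. Factoring $\norm{x-R_hx}$ out of both terms and then invoking~\eqref{qba-Rh} to bound $\norm{x-R_hx} \le \mu_h \norm{x-v_h}$, I take the infimum over $v_h \in V_h$. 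The factor $\bar{\mu}$, which is finite by~\eqref{unif-qba}, can then be absorbed into the $o(1)$ or $O(h^\delta)$ term, yielding the asserted bounds on $\nu_{K,h}$ in the two settings.

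I do not anticipate any substantial obstacle, since Theorem~\ref{T:supercloseness} has already performed the heavy lifting. The only point requiring a little care is verifying that the standing convention~\eqref{adjoint-norm} is in force, so that $\norm{v-w}\le d_{K,\alpha}(v,w)$ holds without an extra constant and the inverse triangle inequality appearing just before the corollary is genuinely at our disposal; this is consistent with the stipulation at the beginning of Section~5.2.
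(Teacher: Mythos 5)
Your proposal is correct and follows essentially the same route as the paper, which derives the corollary from the (inverse) triangle inequality $\norm{x-x_h}\le\norm{x-R_hx}+\norm{x_h-R_hx}\le\norm{x-R_hx}+d_{K,\alpha}(x_h,R_hx)$ stated just before the corollary, Theorem~\ref{T:supercloseness}, and the bound \eqref{qba-Rh} on $\norm{x-R_hx}$, with the constant $\bar{\mu}$ absorbed into the $o(1)$ resp.\ $O(h^\delta)$ term. Your remark that $\norm{\cdot}\le d_{K,\alpha}(\cdot,\cdot)$ relies on $M_a=1$ via the standing choice \eqref{adjoint-norm} is exactly the justification implicit in the paper.
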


In comparison with Lemma~\ref{L:aqba-qualitative} and
Theorem~\ref{T:aqba-quant}, Corollary \ref{C:aqba-cc} features an
additional $M/\sqrt{\alpha}$-factor. This factor stems from the fact
that the derivation we went through used an error notion that also incorporates it. 

%
%
%
%

%
%
%

\bibliographystyle{siam}
\bibliography{./lit}
\end{document}